\newtheorem{thm}{\bf Theorem}[section]
\newtheorem{prop}[thm]{\bf Proposition}
\newtheorem{lem}[thm]{\bf Lemma}
\newtheorem{cor}[thm]{\bf Corollary}
\newtheorem{q}[thm]{\bf Question}
\newtheorem*{thm*}{\bf Theorem}
\newtheorem*{cor*}{\bf Corollary}
\theoremstyle{definition}
\newtheorem{rem}[thm]{\it Remark}
\newtheorem*{df*}{\bf Definition}
\newtheorem*{dfs*}{\bf Definitions}
\newtheorem*{ack*}{\bf Acknowledgements}
\numberwithin{equation}{section}
\def\P{\mathbb{P}}
\def\C{\mathbb{C}}
\def\F{\mathbb{F}}
\def\Q{\mathbb{Q}}
\def\Z{\mathbb{Z}}
\DeclareMathOperator{\tors}{tors}
\DeclareMathOperator{\Gal}{Gal}
\DeclareMathOperator{\nr}{nr}
\DeclareMathOperator{\cl}{cl}
\DeclareMathOperator{\et}{\textrm{\'Et}}
\DeclareMathOperator{\Gm}{\mathbb{G}_{\operatorname{m}}}
\newcommand{\ov}{\overline}
\newcommand{\on}{\operatorname}
\newcommand{\mc}{\mathcal}
\author{Federico Scavia}
\address{CNRS\\
Institut Galil\'ee\\
	Universit\'e Sorbonne Paris Nord\\
	99 avenue Jean-Baptiste Cl\'ement, 93430\\ 
	Villetaneuse, France}
\email{scavia@math.univ-paris13.fr}
\author{Fumiaki Suzuki}
\address{Institute of Algebraic Geometry\\
Leibniz University Hannover\\
Welfengarten 1, 30167, Hannover\\
Germany
}
\email{suzuki@math.uni-hannover.de}
\title[\tiny Non-algebraic geometrically trivial cohomology classes]{
Non-algebraic geometrically trivial cohomology classes over finite fields
}
\date{September 23, 2024}
\subjclass[2020]{14C25, 14G15, 55R35}
\begin{document}

\begin{abstract}
 We give the first examples of smooth projective varieties $X$ over a finite field $\F$ admitting a non-algebraic torsion $\ell$-adic cohomology class
 of degree $4$ which vanishes over $\ov{\F}$. We use them to show that two versions of the integral Tate conjecture over $\F$ are not equivalent to one another and that a fundamental exact sequence of Colliot-Th\'el\`ene and Kahn does not necessarily split. 
 Some of our examples have dimension $4$, and are the first known examples of fourfolds with non-vanishing $H^3_{\on{nr}}(X,\Q_{2}/\Z_{2}(2))$.  
\end{abstract}

\maketitle

\section{Introduction}

Let $\F$ be a finite field, $\ell$ be a prime number invertible in $\F$, 
and $X$ be a smooth projective geometrically connected $\F$-variety. We let $\ov{\F}$ be an algebraic closure of $\F$, $G\coloneqq \Gal(\ov{\F}/\F)$ and $\ov{X}\coloneqq X\times_{\F}\ov{\F}$.
We have the $\ell$-adic cycle maps:
\begin{align}
  CH^2(X)\otimes\Z_{\ell}&\to H^{4}(X,\Z_{\ell}(2))\label{strong-tate},\\
  CH^2(X)\otimes\Z_{\ell}&\to H^{4}(\ov{X},\Z_{\ell}(2))^G\label{medium-tate}.
 \end{align}

From the commutative diagram
\begin{equation}\label{hs-intro}
\adjustbox{scale=0.97,center}{
\begin{tikzcd}
 && CH^2(X)\otimes\Z_{\ell} \arrow[d, "(\ref{strong-tate})"] \arrow[dr, "(\ref{medium-tate})"]  \\
0 \arrow[r] & H^1(\F, H^{3}(\ov{X},\Z_{\ell}(2))) \arrow[r,"\iota"] & H^{4}(X,\Z_{\ell}(2)) \arrow[r] &   H^{4}(\ov{X},\Z_{\ell}(2))^G\arrow[r] & 0,
\end{tikzcd}
}
\end{equation}
where the exact row is induced by the Hochschild-Serre spectral sequence, we see that the surjectivity of (\ref{strong-tate}) implies that of (\ref{medium-tate}). The converse holds in all known examples \cite{antieau2016tate, colliot2010autour, kameko2015tate, pirutka2011groupe, pirutka2015note, quick2011torsion, scavia2022autour}. In particular, the following question had remained open.
\begin{q}\label{are-they-equiv}
    Is it true that, as $X$ ranges over all smooth projective geometrically connected $\F$-varieties, (\ref{strong-tate}) is surjective if and only if (\ref{medium-tate}) is surjective?
\end{q}

In other words: Is it possible for the integral Tate conjecture to fail, (solely) due to the existence of non-algebraic geometrically trivial classes? If so, this would lead to a new interesting obstruction to the integral Tate conjecture over $\F$, which vanishes over $\ov{\F}$. 

Combined with a theorem of Saito \cite[Theorem 8.6]{saito1989observations} (see also \cite[Proposition 3.2]{colliot1999conjectures}), surjectivity of (\ref{strong-tate}) for all three-dimensional $X$ implies Colliot-Th\'el\`ene's conjecture \cite[Conjecture 2.2]{colliot1999conjectures} on zero-cycles of prime-to-$\ell$ degree on smooth projective geometrically connected 
varieties over global function fields of arbitrary dimensions. A positive answer to \cref{are-they-equiv} would be a significant step towards Colliot-Th\'el\`ene's conjecture, reducing surjectivity of (\ref{strong-tate}) to that of (\ref{medium-tate}).

The $\ell$-adic cycle map (\ref{strong-tate}) is related to the third unramified cohomology group
$H^{3}_{\nr}(X,\Q_{\ell}/\Z_{\ell}(2))$, a stable birational invariant of $X$ which is known to be very difficult to compute. According to a theorem of Kahn \cite[Th\'eor\`eme 1.1]{kahn2012classes},
if the group $H^3_{\nr}(X,\Q_\ell/\Z_\ell(2))$ is trivial then the cokernel of (\ref{strong-tate}) is torsion-free, hence (\ref{strong-tate}) is surjective if the Tate conjecture in codimension $2$ is true. Colliot-Th\'el\`ene and Kahn showed that the converse follows from the Beilinson conjecture and a strong form of the Tate conjecture \cite[Th\'eor\`eme 3.18]{colliot2013cycles}. 

It is a challenging problem to construct examples with $H^3_{\nr}(X,\Q_\ell/\Z_\ell(2))\neq 0$ of dimension as small as possible; see e.g. \cite[Question 5.4]{colliot2013cycles} and \cite[\S 5.4]{colliot2023liste}. The known examples of smallest dimension are due to Pirutka \cite{pirutka2011groupe} and have dimension $5$. On the other hand, $H^3_{\nr}(X,\Q_\ell/\Z_\ell(2))$ vanishes if $\dim(X)\leq 2$. The following question naturally arises.

\begin{q}\label{h3nr-question}
Is it true that, for every smooth projective geometrically connected $\F$-variety of dimension $\dim(X)\in \{3,4\}$, we have $H^3_{\nr}(X,\Q_\ell/\Z_\ell(2))=0$?
\end{q}

Codimension $2$ cycles and $H^{3}_{\nr}(X,\Q_{\ell}/\Z_{\ell}(2))$ also appear in the following exact sequence of Colliot-Th\'el\`ene and Kahn \cite[Theorem 6.8]{colliot2013cycles}:
\begin{align}\label{ct-kahn}
    \nonumber 0&\to \on{Ker}(CH^2(X)\to CH^2(\ov{X}))\{\ell\}\xrightarrow{\varphi_{\ell}} H^1(\F, H^3(\ov{X},\Z_{\ell}(2))_{\on{tors}})\\
    &\to \on{Ker}\left(H^3_{\on{nr}}(X,\Q_{\ell}/\Z_{\ell}(2))\to  H^3_{\on{nr}}(\ov{X},\Q_{\ell}/\Z_{\ell}(2))\right) \\ 
    \nonumber &\to\on{Coker}\left(CH^2(X)
    \to CH^2(\ov{X})^G\right)\{\ell\}\to 0,
\end{align}
where the composition of $\varphi_\ell$ with the natural map \[H^1(\F,H^3(\ov{X},\Z_\ell(2))_{\on{tors}})\to H^1(\F,H^3(\ov{X},\Z_\ell(2)))\] coincides with (\ref{strong-tate}).
The following basic question was left open.

\begin{q}\label{phi-ell-surj}
    Is it true that, for every smooth projective geometrically connected $\F$-variety $X$, the map $\varphi_{\ell}$ of (\ref{ct-kahn}) is surjective?
\end{q}

If $\varphi_{\ell}$ were surjective, that is, an isomorphism, the sequence (\ref{ct-kahn}) would split and give even more precise information about the Galois descent of codimension $2$ cycles on $X$. In \cite{colliot2020conjecture}, several equivalent conditions for the surjectivity of $\varphi_{\ell}$ were established, and it was proved that $\varphi_{\ell}$ is surjective if $X$ is the product of a smooth projective surface and an arbitrary number of smooth projective curves.

\subsection{Non-algebraic geometrically trivial cohomology classes}

Our first theorem shows that, despite the above expectations, the answers to \Cref{are-they-equiv} and \Cref{phi-ell-surj} are negative in general.

\begin{thm}\label{mainthm-tate}
    Let $\F$ be a finite field, $\ell$ be a prime number invertible in $\F$, and suppose that $\F$ contains a primitive $\ell^2$-th root of unity. There exists a smooth projective geometrically connected $\F$-variety $X$ of dimension $2\ell+3$ such that:

    (i) the map (\ref{medium-tate}) is surjective  whereas the map  (\ref{strong-tate}) is not, and
    
    (ii) the homomorphism $\varphi_{\ell}$ is not an isomorphism for $X$.
\end{thm}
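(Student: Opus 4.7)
The plan is to build $X$ as a Brauer--Severi-type fibration over a $4$-dimensional base so that both $H^*(\ov X, \Z_\ell)$ and $CH^2(X)$ admit explicit descriptions, and a Brauer class introduces a controlled mismatch between them. Specifically, I would take a smooth projective $\F$-variety $Y$ of dimension $4$ with well-understood $\ell$-adic cohomology---for example, a product of curves and/or abelian surfaces over $\F$ for which the Tate conjecture in codimensions $\leq 2$ is known---together with a Brauer class $\alpha \in \Br(Y)$ of order $\ell$ built from a cyclic $\ell^2$-algebra (where the $\mu_{\ell^2} \subset \F$ hypothesis enters), and let $\pi\colon X \to Y$ be a Brauer--Severi-type fibration of relative dimension $2(\ell-1)$ attached to $\alpha$ (e.g.\ the fiber product of the projective bundle of $\alpha$ with that of its opposite class). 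The total dimension is then $4 + 2(\ell-1) = 2\ell+2$.

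Via diagram (\ref{hs-intro}), part (i) is equivalent to the joint statement that (\ref{medium-tate}) is surjective and $\cl_{AJ}$ is not. For surjectivity of (\ref{medium-tate}), I would invoke the projective bundle / Leray--Hirsch formula for $\pi$ to describe $H^*(\ov X,\Z_\ell)$ as a free $H^*(\ov Y,\Z_\ell)$-module modulo relations involving the class of $\alpha$ and the relative hyperplane classes; this reduces $H^4(\ov X,\Z_\ell(2))^G$ to algebraic classes on $\ov Y$ and their products with relative hyperplane classes, which are algebraic by hypothesis on $Y$. For the non-surjectivity of $\cl_{AJ}$, the Leray spectral sequence of $\pi$ produces an $\ell$-torsion contribution in $H^3(\ov X,\Z_\ell(2))$ directly traceable to $\alpha$; a Frobenius computation (here the $\mu_{\ell^2}$ hypothesis will be crucial) should show that this contribution descends to a non-zero class in $H^1(\F, H^3(\ov X,\Z_\ell(2)))$. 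In parallel, the projective bundle formula for Chow groups describes $CH^2(X)$ in terms of $CH^{\leq 2}(Y)$ and the class of $\alpha$, and a direct comparison will show that the image of $\cl_{AJ}$ misses the class just constructed.

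For (ii), Deligne's theorem implies that Frobenius has no eigenvalue $1$ on $H^3(\ov X,\Q_\ell(2))$, so $H^0(\F, H^3(\ov X,\Z_\ell(2))_{\tf})=0$ and the natural map $H^1(\F, H^3(\ov X,\Z_\ell(2))_{\tors}) \to H^1(\F, H^3(\ov X,\Z_\ell(2)))$ is injective. Since the non-algebraic class produced in (i) is of torsion origin, it lies in this image, and the commutative square (\ref{diagram1}) transfers the non-surjectivity of $\cl_{AJ}$ to that of $\varphi_\ell$, giving (ii). The hard part will be the simultaneous control of $CH^2(X)$ and of the Frobenius action on $H^3(\ov X,\Z_\ell(2))_{\tors}$: one must find $(Y,\alpha)$ whose Brauer-induced torsion class is both non-trivial in $H^1(\F,\cdot)$ and genuinely absent from the image of $\cl_{AJ}$---a balance that the cyclic $\mu_{\ell^2}$-twist should make possible, at the cost of careful case analysis in the spectral sequence computations.
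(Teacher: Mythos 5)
There is a genuine gap, on two levels. First, the construction itself cannot produce the required class. Your mechanism needs a nonzero torsion class in $H^3(\ov{X},\Z_\ell(2))$ (both to feed into $H^1(\F,H^3(\ov{X},\Z_\ell(2))_{\tors})$ and to make statement (ii) non-vacuous: if $H^3(\ov{X},\Z_\ell(2))_{\tors}=0$ then $\varphi_\ell$ is an isomorphism between zero groups). But for a Brauer--Severi-type fibration $\pi\colon X\to Y$ with $Y$ a product of curves and abelian surfaces, $H^*(\ov{Y},\Z_\ell)$ is torsion-free, so the image of your order-$\ell$ Brauer class under the Kummer boundary $\Br(\ov{Y})[\ell]\to H^3(\ov{Y},\Z_\ell(1))_{\tors}$ vanishes; the $d_3$ differential of the geometric Leray spectral sequence therefore kills nothing, all integral differentials are torsion maps into torsion-free groups, and $H^*(\ov{X},\Z_\ell)\simeq H^*(\ov{Y},\Z_\ell)\otimes H^*(\text{fiber},\Z_\ell)$ as graded groups. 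In particular $H^3(\ov{X},\Z_\ell(2))_{\tors}=0$ and the ``$\ell$-torsion contribution traceable to $\alpha$'' does not exist. (Relatedly, your Leray--Hirsch argument for surjectivity of (\ref{medium-tate}) is also unjustified: the projective bundle formula for Chow groups does not hold for Brauer--Severi fibrations, and the hyperplane class is exactly where the Brauer obstruction to algebraicity over $\F$ sits.)

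Second, even granting a torsion class, you have no mechanism for proving non-algebraicity: ``a direct comparison will show that the image of $\cl_{AJ}$ misses the class'' is precisely the open problem, since $CH^2(X)$ of such a fibration is not computable by a projective bundle formula and the target $H^1(\F,H^3(\ov{X},\Z_\ell(2)))$ is a finite group with no a priori reason to exceed the image. The paper's entire technical apparatus exists to supply this missing step: algebraic classes in $H^{4}(X,\Z/\ell)$ over a finite field containing $\mu_{\ell^2}$ are killed by odd-degree \'etale Steenrod operations (\cref{odd-vanish}), these operations are compatible with the Hochschild--Serre filtration (\cref{hochschild-serre-steenrod-finite-fields}), and so a class $u\in H^3(\ov{X},\Z_\ell(2))$ with $P(\pi_\ell(u))\neq 0$ for an odd-degree $P$ forces the associated geometrically trivial class to be non-algebraic (\cref{h1-trick}), provided $G$ acts trivially in the relevant degree. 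The paper then realizes such a $u$ by taking $X$ to be an Ekedahl approximation of $B(\textrm{PGL}_\ell\times\Gm)$ of dimension $2\ell+2$, where $u_3\in H^3(B\textrm{PGL}_\ell,\Z_\ell)$ satisfies $\on{Sq}^3(\pi_2(u_3))\neq 0$, resp.\ $\beta\on{P}^1(\pi_\ell(u_3))\neq 0$ (\cref{pgl-coh}), the geometric cycle map in degree $4$ is surjective by Vistoli/Pandharipande (giving (\ref{medium-tate})), and the Galois action on $H^{2\ell+2}(\ov{X},\Z/\ell)$ is trivial (\cref{galois-acts-triv}). To salvage your approach you would need both a base (or total space) whose geometric cohomology already carries the torsion of $H^*(B\textrm{PGL}_\ell)$ and a concrete obstruction replacing the Steenrod argument; as written, neither is present.
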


More precisely, we construct examples where the image of the composition
\begin{equation}\label{composition}H^1(\F, H^3(\ov{X},\Z_\ell(2))_{\on{tors}})\to H^1(\F, H^3(\ov{X},\Z_\ell(2))) \xrightarrow{\iota} H^4(X,\Z_\ell(2))
\end{equation}
contains a class $\alpha$ whose mod $\ell$ reduction is not algebraic. 

\Cref{mainthm-tate} provides new examples of smooth projective varieties $X$ satisfying $H^3_{\nr}(X,\Q_\ell/\Z_\ell(2))\neq 0$, though the construction only works for $\dim (X)\geq 7$. Using ideas from the proof of \cref{mainthm-tate}, we give the first examples of dimension $4$.

\begin{thm}\label{mainthm}
Let $p$ be an odd prime. There exist a finite field $\F$ of characteristic $p$ and a smooth projective geometrically connected fourfold $X$ over $\F$ for which the image of the composition
\[H^1(\F, H^3(\ov{X},\Z_2(2))_{\on{tors}})\to H^1(\F, H^3(\ov{X},\Z_2(2))) \xrightarrow{\iota} H^4(X,\Z_2(2))\]
contains a non-algebraic torsion class.
In particular, $H^3_{\on{nr}}(X,\Q_{2}/\Z_{2}(2))\neq 0$.
\end{thm}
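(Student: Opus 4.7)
My plan is to adapt the Steenrod-operation strategy of \Cref{mainthm-tate} to a fourfold by exploiting the $2$-torsion intrinsic to the integral $2$-adic cohomology of an Enriques surface. I would take $X = S \times Y$ over a suitable finite field $\F$ of characteristic $p \neq 2$, where $S$ is an Enriques surface (so $H^2(\ov{S},\Z_2)$ contains a $\Z/2$ summand and $H^3(\ov{S},\Z_2) = \Z/2$) and $Y$ is a carefully chosen smooth projective surface, for instance an abelian surface or a product of two curves, whose Galois module $H^1(\ov{Y},\Z_2)$ has the desired Frobenius structure. By the K\"unneth formula, pairing the torsion of $H^2(\ov{S},\Z_2)$ with $H^1(\ov{Y},\Z_2)$, and the torsion of $H^3(\ov{S},\Z_2)$ with $H^0(\ov{Y},\Z_2)$, produces $2$-torsion in $H^3(\ov{X},\Z_2(2))$; one then tunes $\F$ and $Y$ so that $H^1(\F,H^3(\ov{X},\Z_2(2))_{\tors})$ contains a nonzero class $\beta$ whose image $\alpha := \iota(\beta) \in H^4(X,\Z_2(2))$ is a nonzero torsion class.

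The core of the argument is to show that $\alpha$ is not algebraic by exhibiting a nonvanishing odd-degree \'etale Steenrod operation on its mod-$2$ reduction $\ov{\alpha}$. The Atiyah--Hirzebruch obstruction asserts that $\on{Sq}^3$ annihilates mod-$2$ reductions of algebraic classes in $H^4(X,\F_2)$, so it suffices to show $\on{Sq}^3 \ov{\alpha} \neq 0$ in $H^7(X,\F_2)$. For this I would invoke \Cref{hochschild-serre-steenrod-finite-fields}, giving the compatibility of \'etale Steenrod operations over $\F$ with the Hochschild--Serre spectral sequence. This reduces the calculation to one over $\ov{X}$---controllable via the Cartan formula and the known Steenrod action on the mod-$2$ cohomology of an Enriques surface and of $Y$---combined with the Frobenius action governing the Hochschild--Serre edge map. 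One arranges the construction so that the K\"unneth summand carrying the lift of $\beta$ has nonzero $\on{Sq}^3$ on $\ov{X}$ which survives the transfer to $H^7(X,\F_2)$.

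The principal obstacle is the simultaneous engineering of the Galois action and the Steenrod computation: one must choose $Y$ and $\F$ so that Frobenius acts with the correct eigenvalues on the relevant torsion K\"unneth summand---ensuring $\beta$ lifts to a class in $H^1(\F,H^3(\ov{X},\Z_2(2))_{\tors})$ whose image in $H^1(\F,H^3(\ov{X},\Z_2(2)))$, and hence in $H^4(X,\Z_2(2))$ via $\iota$, is nonzero---while at the same time the mod-$2$ reduction of the corresponding summand has nonvanishing $\on{Sq}^3$ in $H^7(\ov{X},\F_2)$ that descends to $H^7(X,\F_2)$. Once such $\alpha$ is produced, the conclusion $H^3_{\nr}(X,\Q_2/\Z_2(2)) \neq 0$ is immediate: its image in the cokernel of (\ref{strong-tate}) is a nonzero torsion class, so this cokernel has torsion, and by the theorem of Colliot-Th\'el\`ene and Kahn \cite[Theorem 2.2]{colliot2013cycles} recalled in the introduction, this forces $H^3_{\nr}(X,\Q_2/\Z_2(2)) \neq 0$.
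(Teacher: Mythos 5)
Your overall skeleton (find a $2$-torsion class $u\in H^3(\ov{X},\Z_2(2))$ on which an odd-degree Steenrod operation is non-zero mod $2$, push it through \Cref{hochschild-serre-steenrod-finite-fields} and \Cref{h1-trick}, then deduce $H^3_{\nr}\neq 0$ from the torsion in the cokernel of (\ref{strong-tate}) via Colliot-Th\'el\`ene--Kahn) is exactly the paper's, and that last deduction is correct. The gap is in the choice of $X$: a product $X=S\times Y$ of an Enriques surface with an abelian surface or a product of curves carries \emph{no} torsion class in $H^3$ with the required property. For $u\in H^3(\ov{X},\Z_2)$ the only odd-degree operation that can be non-zero on $\pi_2(u)$ is $\on{Sq}^3(\pi_2(u))=\pi_2(u)^2$: one has $\on{Sq}^1\pi_2(u)=0$ because $\pi_2(u)$ lifts to $\Z/4$, the remaining odd operations of degree $3$ and $5$ factor through $\on{Sq}^1$ or vanish on degree-$3$ classes, and operations of degree $>5$ land in groups that vanish on a fourfold. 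Since $H^*(\ov{Y},\Z_2)$ is torsion-free, the K\"unneth formula has no $\on{Tor}$ terms, so $H^3(\ov{X},\Z_2)_{\tors}$ is spanned by $t\times e$ (with $t$ generating $H^2(\ov{S},\Z_2)_{\tors}\simeq\Z/2$ and $e\in H^1(\ov{Y},\Z_2)$) and $v\times 1$ (with $v$ generating $H^3(\ov{S},\Z_2)\simeq\Z/2$). But $(\pi_2(t)\times\pi_2(e))^2=\pm\,\pi_2(t)^2\times\pi_2(e)^2=0$ because $\pi_2(e)^2=\on{Sq}^1\pi_2(e)=0$ for the reduction of an integral degree-one class, $(\pi_2(v)\times 1)^2=0$ because $H^6(\ov{S},\Z/2)=0$, and all cross terms die mod $2$. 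Hence $\on{Sq}^3\pi_2(u)=0$ for every torsion $u$, and no tuning of $\F$ or of the Frobenius action can rescue this: by \Cref{hochschild-serre-steenrod-finite-fields} the relevant map is $H^1(\F,P)$, which annihilates the class of $\pi_2(u)$ as soon as $P(\pi_2(u))=0$ in $H^6(\ov{X},\Z/2)$.

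This is precisely the difficulty that forces the paper to use the Benoist--Ottem fourfold rather than a product. Starting from the surface $S=(E_1\times_R E_2)/(\Z/4)$ over the ring of integers $R$ of a $p$-adic field, one forms the free quotient $Y=(S\times_R E)/(\Z/2)$, a double cover $Y'\to Y$ attached to a class $\pi^*(\beta)$, and finally $Z=(Y'\times_R E)/(\Z/2)$; on $Z_{\C}$ Benoist--Ottem produce a $2$-torsion class $\sigma\in H^3$ which is \emph{not} of K\"unneth product type and satisfies $\pi_2(\sigma)^2\neq 0$. The paper then descends $\sigma$ to $R$ and specializes to the closed fiber, using \Cref{smooth-proper} to transport the Steenrod computation, and concludes via \Cref{h1-trick} exactly as you propose. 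So the missing ingredient in your argument is a genuinely non-product fourfold whose degree-$3$ torsion has non-zero square mod $2$; with $S\times Y$ the obstruction you want to use is identically zero.
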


Thus \Cref{h3nr-question} has negative answer in dimension $4$; the question remains open in dimension $3$. It should be noted that, over the complex numbers, examples of threefolds $X$ such that $H^3_{\on{nr}}(X,\Q_{2}/\Z_{2}(2))\neq 0$ abound. Indeed, the Trento counterexamples to the integral Hodge conjecture over $\C$ due to Koll\'ar \cite{kollar1992trento} have non-trivial $H^3_{\on{nr}}$. Conjecturally, no such example can exist over $\ov{\F}$. Indeed, a celebrated theorem of Schoen \cite{schoen1998integral} states that if the rational Tate conjecture for surfaces over finite fields is true, then the integral Tate conjecture for $1$-cycles over $\ov{\F}$ is true in arbitrary dimension.

The example of \cref{mainthm} is based on a construction of Benoist and Ottem \cite[Theorem 5.3]{benoist2021coniveau}. For such $X$, the map (\ref{strong-tate}) is not surjective, but we do not know whether (\ref{medium-tate}) is surjective. Using \'etale cobordism, we show that our methods cannot be used to find an example of dimension $3$; see \Cref{no-dim3}.

\subsection{New ingredients}

We now explain the new ideas that go into the proof of \Cref{mainthm-tate}. Over the complex numbers, Atiyah and Hirzebruch \cite{atiyah1962analytic} proved that all odd-degree Steenrod operations vanish on algebraic classes, and used this to construct the first counterexamples to the integral Hodge conjecture. Their examples are Godeaux-Serre varieties: quotients of smooth complete intersections by the free action of a finite group. As shown by Totaro \cite[Th\'eor\`eme 2.1]{colliot2010autour}, the method of Atiyah--Hirzebruch may be adapted to give counterexamples to the integral Tate conjecture over an arbitrary {\em algebraically closed} field. This method has later been refined by Pirutka--Yagita \cite{pirutka2015note}. Of course, this does not say anything about \Cref{are-they-equiv}, as the cohomology classes of interest all vanish over $\ov{\F}$.

The key new idea for our proof is that {\em one can use Steenrod operations over $\F$  to detect non-algebraic geometrically trivial classes}. More precisely, we are able to prove the following analogue of the Atiyah--Hirzebruch criterion over a finite field.

\begin{thm}[\Cref{odd-vanish}]\label{odd-vanish-intro}
    Let $i\geq 1$ be an integer, $\ell\geq i$ be a prime number, $\F$ be a finite field containing a primitive $\ell^2$-th root of unity $\zeta$ and $X$ be a smooth projective $\F$-variety. If $\alpha\in H^{2i}(X,\Z/\ell)$ is an algebraic class, then all odd-degree Steenrod operations on $H^*(X,\Z/\ell)$ vanish on $\alpha$.
 \end{thm}

(The assumption $\ell\geq i$ already appears over $\ov{\F}$; see \cite[Th\'eor\`eme 2.1]{colliot2010autour}. It is vacuously true when $i=2$, which is the case we are interested in.) The proof of \Cref{odd-vanish-intro}, is much harder than the proof of its counterpart over algebraically closed fields and requires new ideas: the entire \Cref{sec3} is devoted to it. In particular, a key technical ingredient in our argument is the proof of the compatibility of \'etale Steenrod operations over $\F$ and $\ov{\F}$ with the Hochschild-Serre spectral sequence, which we prove by appealing to \'etale homotopy theory to reduce to the analogous (classical) problem for the Serre spectral sequence for Serre fibrations; see \Cref{hochschild-serre-steenrod} and \Cref{hochschild-serre-steenrod-finite-fields}. (For reasons that will be clarified in the next paragraph, we need to work with simplicial schemes as opposed to schemes, but this does not make the proofs harder.)

In order to prove \Cref{mainthm-tate}, it remains to construct examples to which \Cref{odd-vanish-intro} may be applied. Here we follow the general philosophy of Atiyah--Hirzebruch of using suitable smooth projective ``approximations'' of a classifying space $BG$, where $G$ is an algebraic group. Loosely speaking, this reduces the problem of finding a cohomology class not killed by some odd-degree Steenrod operation to a calculation in the group cohomology of $G$. When $G$ is finite, Atiyah and Hirzebruch considered high-dimensional quotients of a general smooth projective complete intersection $Y$ by the free action of $G$. When $G$ is a reductive group, similar constructions have been carried out by Ekedahl \cite{ekedahl2009approximating} and Pirutka--Yagita \cite{pirutka2015note}. (Over the complex numbers, one may take $BG$ to be a topological space. In positive characteristic, one may view $BG$ as an algebraic stack or a simplicial scheme. We follow the second approach.)

Even with \Cref{odd-vanish-intro} at our disposal, none of the counterexamples to the integral Tate conjecture already in the literature allows us to prove \Cref{mainthm-tate}, since in those cases (\ref{medium-tate}) is not surjective. Instead, we consider the projective linear group $G=\on{PGL}_\ell$. The Chow groups and singular cohomology of $B\on{PGL}_n$ are in general quite mysterious. However, when $n=\ell$ is prime they have been computed by Vistoli \cite{vistoli2007cohomology}, who generalized earlier work of Vezzosi \cite{vezzosi2000chow}. It follows that if $X$ is a sufficiently fine smooth projective approximation of $BG$, then $H^4(\ov{X},\Z_\ell(2))\simeq \Z_\ell$ and that (\ref{medium-tate}) is surjective. Combining Vistoli's results with Galois cohomology computations, we show that, when $\F$ contains a primitive $\ell$-th root of unity, the subgroup of geometrically vanishing classes $H^1(\F,H^3(\ov{X},\Z_\ell))\simeq \Z/\ell$ is generated by an $\ell$-torsion class $\alpha$. Finally, we show by topological means that the odd-degree Steenrod operation $\beta\on{P}^1$ does not vanish on the mod $\ell$ reduction of $\alpha$, and we conclude that $\alpha$ is not algebraic by \Cref{odd-vanish-intro}.

\subsection{Structure of the paper} The present paper is structured as follows. \Cref{sec2} presents some preliminaries on \'etale Steenrod operations. In \Cref{sec3}, we prove the compatibility of Steenrod operations with the mod $\ell$ Hochschild spectral sequence (\cref{hochschild-serre-steenrod}), which leads to a criterion for the existence of non-algebraic geometrically trivial $\ell$-adic cohomology classes for varieties over finite fields (\cref{h1-trick}).
\Cref{sec4} is devoted to the proof of Theorems \ref{mainthm-tate} and \ref{mainthm}. 

\subsection{Two coniveau filtrations and Abel-Jacobi maps}

Recently, Benoist--Ottem \cite{benoist2021coniveau} studied the coniveau and strong coniveau filtrations over the complex numbers and gave the first examples where the two filtrations  differ. 

Theorems \ref{mainthm-tate} and \ref{mainthm} provide the first examples where the two coniveau filtrations differ over $\ov{\F}$.
This is a consequence from the following general fact, to be proved in a subsequent paper using Jannsen's Abel-Jacobi map over finite fields: {\em If $X$ is a smooth projective geometrically connected variety over $\F$ and the coniveau and strong coniveau filtrations on $H^3(\ov{X},\Z_\ell(2))$ agree, then $\varphi_\ell$ is an isomorphism.} 

However, simpler examples where the two coniveau filtrations differ may be given following the original approach of Benoist and Ottem. Indeed, we also show that the constructions and the proofs of Benoist and Ottem adapt to an arbitrary algebraically closed field. In particular, we prove a relative version of Wu's Theorem in arbitrary characteristic.

\subsection{Notation}
Let $k$ be a field. We denote by $\ov{k}$ a separable closure of $k$ and $G\coloneqq \Gal(\ov{k}/k)$ the absolute Galois group of $k$. For a continuous $G$-module $M$,  we denote by $H^i(k, M)\coloneqq H^i(G,M)$ the continuous Galois cohomology of $M$.
 When $k$ is a finite field, we denote it by $\F$.

  If $X$ is a $k$-scheme, we define $\ov{X}\coloneqq X\times_k \ov{k}$. A $k$-variety is a separated $k$-scheme of finite type. For a smooth $k$-variety $X$ of pure dimension and an integer $i\geq 0$, we denote by $CH^i(X)$ the Chow group of codimension $i$ cycles on $X$ modulo rational equivalence.
If $\ell$ is a prime invertible in $k$, the notation $H^j(X,\Z_\ell(m))$ will mean the $\ell$-adic continuous \'etale cohomology group of Jannsen \cite{jannsen1988continuous}.
This coincides with the usual \'etale cohomology $\varprojlim_n H^i(X,\mu_{\ell^n}^{\otimes m})$ when the base field $k$ has finite Galois cohomology, which is the case if $k$ is separably closed or a finite field. We write $H^j(X,\Q_{\ell}/\Z_\ell(m))\simeq \varinjlim H^j(X,\mu_{\ell^n}^{\otimes m})$ for the (continuous) \'etale cohomology of the sheaf $\Q_{\ell}/\Z_\ell(m)$. 
We let $H^i_{\nr}(X,\Q_\ell/\Z_\ell(m))$ be the unramified cohomology group.

For an abelian group $A$, an integer $n\geq 1$, and a prime number $\ell$, we denote $A[n]\coloneqq \left\{a\in A\mid na=0\right\}$, $A\{\ell\}$ the subgroup of $\ell$-primary torsion elements of $A$ and
$A_{\tors}$ the subgroup of torsion elements of $A$.

\section{Preliminaries on \'etale Steenrod operations}\label{sec2}

\subsection{\'Etale homotopy type}\label{etale-homotopy-type}

Let $X_{\bullet}=(X_n)_{n\geq 0}$ be a simplicial scheme, such that $X_n$ is  a locally noetherian scheme for every $n\geq 0$. We will only be interested in the cases when $X_{\bullet}$ is the simplicial scheme associated to a scheme $X$, that is, $X_n=X$ for all $n\geq 0$, and all degeneracy and face maps of $X_{\bullet}$ are the identity, or $X_{\bullet}=B\mc{G}$ is the simplicial classifying space of a linear algebraic group $\mc{G}$ over a field $k$, as defined in \cite[Example 1.2]{friedlander1982etale}. If $x\colon  \on{Spec}(\Omega)\to X_0$ is a geometric point, the pair $(X_{\bullet},x)$ is said to be a pointed simplicial scheme.

We let $\on{AbSh}(X_{\bullet})$ be the abelian category of abelian sheaves on the small \'etale site of $X_{\bullet}$. If $A$ is an object of $\on{AbSh}(X_{\bullet})$, we define the cohomology of $A$ as 
\[H^i(X_{\bullet},A)\coloneqq \on{Ext}^i_{\on{AbSh}(X_{\bullet})}(\Z,A),\]
where $\mathbb{Z}$ is the constant sheaf associated to $\mathbb{Z}$; see \cite[Definition 2.3]{friedlander1982etale}. 

We write $\on{HRR}(X_{\bullet})$ for the left directed category of rigid hypercoverings of $X_{\bullet}$; see \cite[Proposition 4.3]{friedlander1973fibrations}. We let $\et(X_{\bullet})$ be Friedlander's \'etale topological type of $X_{\bullet}$; see \cite[Definition 4.4]{friedlander1982etale}. It is the pro-simplicial set (that is, the pro-object in the category of simplicial sets) defined as the functor
\[\et(X_{\bullet})\colon \on{HRR}(X_{\bullet})\to \on{sSet}\]
sending a rigid hypercovering $U_{\bullet\bullet}\to X_{\bullet}$ to $\pi(\Delta(U_{\bullet\bullet}))$, that is, the simplicial set of level-wise connected components of the diagonal simplicial scheme $\Delta(U_{\bullet\bullet})$. If $(X_{\bullet},x)$ is a pointed simplicial scheme, $\et(X_{\bullet})$ is naturally a pointed pro-simplicial space, that is, a pro-object in the category of pointed simplicial sets, which we denote by $\et(X_{\bullet},x)$.

Let $A$ be an abelian group. If $S_{\bullet}$ is a simplicial set, we denote by $\Z(S_{\bullet})$ the free simplicial abelian group on $S_{\bullet}$, and by  $H^*(S_{\bullet},A)$ the singular cohomology groups of $S_{\bullet}$ with coefficients in $A$, that is, the cohomology of the chain complex corresponding to $\on{Hom}(\Z(S_{\bullet}),A)$ via the Dold-Kan correspondence; see \cite[Corollary III.2.3]{goerss2009simplicial}. We also denote by $A$ the abelian sheaf on the small \'etale site of $X$ associated to $A$. Following \cite[Definition 5.1]{friedlander1982etale} we define
\[H^n(\et(X_{\bullet}),A)\coloneqq \on{colim} H^n(\pi(\Delta(U_{\bullet\bullet})),A),\]
where on the right side we consider cohomology of simplicial sets and the colimit is indexed by $\on{HRR}(X_{\bullet})$.
By \cite[Proposition 5.9]{friedlander1982etale}, there is a natural isomorphism
\begin{equation}\label{identification}H^*(X_{\bullet},A)\xrightarrow{\sim} H^*(\et(X_{\bullet}),A).\end{equation}

If $X$ is a $k$-scheme of finite type and $X_{\bullet}$ is the associated simplicial scheme, we have a canonical identification $H^*(X,A)\simeq H^*(X_{\bullet},A)$, and we define $\et(X)\coloneqq \et(X_{\bullet})$. 

\subsection{\'Etale Steenrod operations}\label{define-steenrod}
Steenrod operations in \'etale cohomology have long been known and used. Epstein \cite{epstein1966steenrod} constructed Steenrod operations in great generality, and his definition may  be applied to \'etale cohomology. Instead, we follow the definition of \cite{feng2020etale}, based on \'etale homotopy theory. This approach is closer to construction of Jardine \cite{jardine1989}. 

Let $\ell$ be a prime number. If $S_{\bullet}$ is a simplicial set, $H^*(S_{\bullet},\Z/\ell)$ is endowed with Steenrod operations, and these operations are natural with respect to simplicial maps $S'\to S$. It follows that $H^*(\et(X_{\bullet}),\Z/\ell)$ is endowed with mod $\ell$ Steenrod operations: power operations
\[\on{P}^n\colon H^*(X_{\bullet},\Z/\ell)\to H^{*+2n(\ell-1)}(X_{\bullet},\Z/\ell),\qquad n\geq 0\]
and a reduced Bockstein homomorphism
\[\beta\colon H^*(X_{\bullet},\Z/\ell)\to H^{*+1}(X_{\bullet},\Z/\ell).\]
By definition, $\beta$ is the connecting homomorphism of the cohomology long exact sequence associated to the short exact sequence 
    \[1\to \Z/\ell \to \Z/\ell^2 \to \Z/\ell\to 1\]
    of \'etale sheaves on $X_{\bullet}$. More generally, for every $m\geq 1$, we have a homomorphism 
    \[\beta_m\colon H^*(X_{\bullet},\Z/\ell)\to H^{*+1}(X_{\bullet},\Z/\ell^m)\]
    obtained as the connecting homomorphism of the cohomology long exact sequence associated to the short exact sequence 
    \[1\to \Z/\ell^m \to \Z/\ell^{m+1} \to \Z/\ell\to 1.\]
    The homomorphisms $\beta_m$ form an inverse system in $m$. Passing to the inverse limit in $m$ yields the non-reduced Bockstein homomorphism
    \[\tilde{\beta}\colon H^*(X_{\bullet},\Z/\ell)\to H^{*+1}(X_{\bullet},\Z_{\ell}).\]
    Letting \[\pi_{\ell}\colon H^*(X_{\bullet},\Z_\ell)\to H^{*}(X_{\bullet},\Z/{\ell})\]
    be the homomorphism of reduction modulo $\ell$, we have \[\beta=\pi_{\ell}\circ\tilde{\beta}.\]
    Since the \'etale Steenrod operations are defined as colimit of the simplicial Steenrod operations, the standard properties of the simplicial operations are also true for the \'etale operations. Therefore, the \'etale Steenrod operations are natural with respect to morphisms of simplicial schemes, $\on{P}^0$ is the identity, $\on{P}^n(x)=x^\ell$ if $|x|=2n$ and $\on{P}^n(x)=0$ if $|x|<2n$, Cartan's formula and the Adem relations hold. 
    
    If $\ell=2$, we define the Steenrod squares as
    \[\on{Sq}^{2n}\coloneqq \on{P}^n,\qquad \on{Sq}^{2n+1}\coloneqq \beta\on{P}^n.\] 
    Since $\on{P}^0$ is the identity, we have $\beta=\on{Sq}^1$. 
    
    We write $\on{P}$ for the total Steenrod $\ell$-th power operation, and $\on{Sq}$ for the total Steenrod square:
    \[\on{P}\coloneqq \sum_{j=0}^{\infty}\on{P}^j,\qquad \on{Sq}\coloneqq \sum_{j=0}^{\infty}\on{Sq}^j.\]
    Cartan's formula translates to the fact that $\on{P}$ and $\on{Sq}$ are ring homomorphisms.

 \subsection{Specialization map}
 
 For the proof of \Cref{mainthm}, it will be useful to know that the specialization map in \'etale cohomology is compatible with Steenrod operations.
 
\begin{lem}\label{smooth-proper}
Let $R$ be a strictly henselian discrete valuation ring, $k$ be the residue field of $R$, $K$ be the fraction field of $R$, $\mc{X}$ be a smooth projective $R$-scheme of finite type, and $\ell$ be a prime number invertible in $k$. 
Fix geometric points
\[
\ov{s}\colon \on{Spec}(k)\to \on{Spec}(R), \qquad \ov{\eta}\colon \on{Spec}(\ov{K})\to \on{Spec}(R).
\]
Then the specialization map in \'etale cohomology \[\on{sp}_{\ov{\eta},\ov{s}}\colon H^*(\mc{X}_{\ov{\eta}},\Z/{\ell})\xrightarrow{\sim} H^*(\mc{X}_{\ov{s}},\Z/{\ell})\]
is compatible with Steenrod operations.
\end{lem}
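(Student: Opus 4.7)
The plan is to realize the specialization map as a two-step zig-zag of pullbacks along genuine morphisms of schemes, and then to appeal to the naturality of \'etale Steenrod operations established in \Cref{etale-homotopy-type} and \Cref{define-steenrod}. The two morphisms in question are the base-change maps $i \colon \mc{X}_{\ov{s}} \to \mc{X}$ and $j \colon \mc{X}_{\ov{\eta}} \to \mc{X}$ obtained by pulling back $f \colon \mc{X} \to \on{Spec}(R)$ along $\ov{s} \to \on{Spec}(R)$ and $\ov{\eta} \to \on{Spec}(R)$, respectively.

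Next, I would argue that the induced pullbacks $i^{\ast}$ and $j^{\ast}$ on mod $\ell$ \'etale cohomology are isomorphisms. For $i^{\ast}$, since $f$ is proper and $\on{Spec}(R)$ is strictly henselian, the Leray spectral sequence for $f$ degenerates to the $E_2^{0,\ast}$-line and, combined with proper base change, identifies $H^{\ast}(\mc{X},\Z/\ell)$ with $H^{\ast}(\mc{X}_{\ov{s}},\Z/\ell)$ via $i^{\ast}$. For $j^{\ast}$, using additionally that $f$ is smooth and that $\ell$ is invertible in $R$, smooth and proper base change yields that $R^q f_{\ast}\Z/\ell$ is lisse on $\on{Spec}(R)$; together with the Leray spectral sequence for $\mc{X}_{\ov{\eta}}\to\on{Spec}(\ov{K})$ and the fact that $\ov{K}$ is separably closed, this gives that $j^{\ast}$ is also an isomorphism. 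By its standard construction via the specialization of stalks of the lisse sheaf $R^q f_{\ast}\Z/\ell$, the specialization map $\on{sp}_{\ov{\eta},\ov{s}}$ coincides with the composition $i^{\ast}\circ (j^{\ast})^{-1}$.

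To conclude, I would observe that $i$ and $j$ are morphisms of locally noetherian schemes and so induce morphisms of Friedlander's \'etale topological types $\et(\mc{X}_{\ov{s}})\to \et(\mc{X})$ and $\et(\mc{X}_{\ov{\eta}})\to \et(\mc{X})$. Via the identification (\ref{identification}), $i^{\ast}$ and $j^{\ast}$ are pullbacks on the singular cohomology of these pro-simplicial sets; these commute with all mod $\ell$ Steenrod operations by naturality of the simplicial Steenrod operations, which is precisely how the \'etale operations were constructed in \Cref{define-steenrod}. Hence $i^{\ast}$, $(j^{\ast})^{-1}$ and their composite all commute with Steenrod operations, giving the lemma. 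The proof is essentially a naturality argument and I do not expect a serious obstacle; the one piece that must be carefully checked is the identification of $\on{sp}_{\ov{\eta},\ov{s}}$ with the zig-zag $i^{\ast}\circ(j^{\ast})^{-1}$, but this is a routine consequence of the two Leray spectral sequences and proper base change.
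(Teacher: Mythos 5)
Your proof is correct and follows essentially the same route as the paper's: both realize $\on{sp}_{\ov{\eta},\ov{s}}$ as the zig-zag $i^*\circ (j^*)^{-1}$ through the total space $\mc{X}$ over $\on{Spec}(R)$ and conclude by naturality of the \'etale Steenrod operations with respect to the two fiber inclusions. The only cosmetic difference is that the paper runs the argument one level up, at the level of pro-$\ell$-completed \'etale homotopy types (quoting Friedlander's Propositions 8.6 and 8.7 for the two homotopy equivalences), whereas you verify the two cohomology isomorphisms directly via the Leray spectral sequence and the smooth and proper base change theorems --- which is the same input, packaged at the level of cohomology.
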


\begin{proof}
Since $R$ is strictly henselian and $\mc{X}$ is smooth over $R$, there exists a section $x\colon \on{Spec}(R)\to \mc{X}$. We set $x_{\ov{s}} \coloneqq x\circ \ov{s}\in \mc{X}(k)$ and $x_{\ov{\eta}}\coloneqq x\circ \ov{\eta}\in \mc{X}(\ov{K})$. By \cite[Propositions 8.6 and 8.7]{friedlander1982etale}, the closed embedding $\mc{X}_k\hookrightarrow \mc{X}$ and the open embedding $\mc{X}_{K}\hookrightarrow \mc{X}$ induce homotopy equivalences
    \[\on{holim}(\Z/\ell)_{\infty}(\textrm{\'Et}(\mc{X}_{\ov{s}},x_{\ov{s}}))\xrightarrow{i}\on{holim}(\Z/\ell)_{\infty}(\textrm{\'Et}(\mc{X},x_{\ov{s}}))\xleftarrow{j}\on{holim}(\Z/\ell)_{\infty}(\textrm{\'Et}(\mc{X}_{\ov{\eta}},x_{\ov{\eta}})),\]
    where $(\Z/\ell)_{\infty}(-)$ is the Bousfield-Kan pro-$\ell$ completion functor, and $\on{holim}(-)$ is the Bousfield-Kan homotopy (inverse) limit functor; see \cite[p. 57]{friedlander1982etale}. 
    (This is a consequence of the smooth and proper base change theorems in \'etale cohomology.) The specialization map $\on{sp}_{\ov{\eta},\ov{s}}$ is induced by the homotopy equivalence of pointed pro-simplicial sets $j^{-1}\circ i$, hence it commutes with Steenrod operations. 
\end{proof}

There is a similar result for the classifying space of a linear algebraic group. We will need it during the proof of \Cref{mainthm-tate}.

\begin{lem}\label{base-change-bg}
 Let $\mc{G}$ be a reductive group over $\Z$, $k$ an algebraically closed field of positive characteristic, $\ell$ be a prime invertible in $k$. Then we have a commutative square of ring homomorphisms
 \[
 \begin{tikzcd}
 H^*(B_k\mc{G},\Z_{\ell}) \arrow[r,"\sim"] \arrow[d, "\pi_{\ell}"] &   H^*_{\on{sing}}(B\mc{G}(\C),\Z)\otimes\Z_{\ell} \arrow[d, "\pi_{\ell}"] \\
 H^*(B_k\mc{G},\Z/{\ell}) \arrow[r,"\sim"] & H^*_{\on{sing}}(B\mc{G}(\C),\Z/\ell),
 \end{tikzcd}
 \]
 where the horizontal maps are isomorphisms. Moreover, the bottom horizontal map is compatible with \'etale Steenrod operations (on the left side) and topological Steenrod operations (on the right side).
\end{lem}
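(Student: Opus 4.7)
The plan is to realize both horizontal isomorphisms as limits of maps induced by explicit zigzags of pro-simplicial set morphisms, so that commutativity of the square and compatibility with Steenrod operations both follow formally from naturality on simplicial sets, in the same spirit as the proof of \Cref{smooth-proper}.

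First, I would establish the bottom isomorphism with $\Z/\ell$ coefficients in three stages. Since $\mc{G}$ is reductive over $\Z$, each term $\mc{G}^n$ in the simplicial scheme $B\mc{G}$ is smooth over $\Z[1/\ell]$. Choose a strictly henselian discrete valuation ring $R$ of mixed characteristic $(0,p)$, with $p=\on{char}(k)$ and residue field containing $k$, and fix geometric points $\ov{s}$ and $\ov{\eta}$ above the closed and generic points of $\on{Spec}(R)$. Applying Friedlander's smooth base change theorem for simplicial schemes (Propositions~8.6--8.7 of \cite{friedlander1982etale}) level-wise to $B\mc{G}$ produces a specialization pro-$\ell$ homotopy equivalence between $\et(B\mc{G}_{\ov{\eta}})$ and $\et(B\mc{G}_{\ov{s}})$, hence an isomorphism $H^*(B\mc{G}_{\ov{\eta}},\Z/\ell)\xrightarrow{\sim} H^*(B\mc{G}_{\ov{s}},\Z/\ell)$. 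Invariance of \'etale cohomology under extensions of algebraically closed fields, applied term by term to $B\mc{G}$, compares $\ov{s}$ with $k$; a spreading-out argument together with Artin's comparison theorem in Friedlander's form (Theorem~8.4 of \cite{friedlander1982etale}) produces the final zigzag connecting $\et(B\mc{G}_{\ov{\eta}})$ to the singular simplicial set of $B\mc{G}(\C)$, each stage being a pro-$\ell$ homotopy equivalence of pro-simplicial sets.

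To upgrade to $\Z_{\ell}$ coefficients, note that $H^*_{\on{sing}}(B\mc{G}(\C),\Z)$ is finitely generated in each degree (one may reduce to a maximal compact subgroup), so $H^*_{\on{sing}}(B\mc{G}(\C),\Z)\otimes \Z_\ell \cong \varprojlim_n H^*_{\on{sing}}(B\mc{G}(\C),\Z/\ell^n)$; running the same three-stage argument with $\Z/\ell^n$ coefficients for each $n$ and taking inverse limits yields the top isomorphism, and commutativity of the square with the reduction maps $\pi_\ell$ is tautological. For Steenrod compatibility of the bottom row, recall that \'etale Steenrod operations are by definition the simplicial Steenrod operations on $\et(-)$, while topological Steenrod operations are the simplicial ones on the singular simplicial set of $B\mc{G}(\C)$; naturality of simplicial Steenrod operations under the pro-simplicial-set maps constructed above then gives the required compatibility for free.

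The main obstacle is that $B\mc{G}$ is a non-compact simplicial scheme, so proper base change is not directly available and everything must be done via smooth base change in the simplicial setting, which is precisely what the reductivity hypothesis on $\mc{G}$ is used to guarantee (via smoothness of each $\mc{G}^n$ over $\Z[1/\ell]$). An alternative route through degeneration of the Leray--Serre spectral sequence $H^*(BT)\Rightarrow H^*(B\mc{G})$ for a maximal torus $T\subset \mc{G}$ would avoid \'etale homotopy theory altogether, but it would obscure the pro-simplicial-set model needed to transport Steenrod operations across the comparison, so the homotopical approach above seems cleanest.
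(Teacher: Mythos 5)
Your proposal follows essentially the same route as the paper: the paper's entire proof is a citation of \cite[Proposition 8.8]{friedlander1982etale}, which directly supplies the pro-$\ell$ homotopy equivalence $\on{holim}(\Z/\ell)_{\infty}(\textrm{\'Et}(B_k\mc{G}))\simeq \on{holim}(\Z/\ell)_{\infty}(\on{Sing}(B\mc{G}(\C)))$, and the $\Z/\ell$- and $\Z_\ell$-coefficient isomorphisms, the commutativity of the square, and the Steenrod compatibility all follow formally from that single equivalence exactly as you say. The one soft spot in your re-derivation of this equivalence is the claim that Propositions 8.6--8.7 of \cite{friedlander1982etale} can be applied ``level-wise'' to $B\mc{G}$: those results (as used in \Cref{smooth-proper}) require smoothness \emph{and} properness, and the levels $\mc{G}^n$ are smooth but not proper over the DVR, so smoothness alone does not give the specialization isomorphism (a smooth affine family can have jumping cohomology); the constancy of $H^*(\mc{G}^n,\Z/\ell)$ across fibers is a genuinely special feature of reductive groups, and this is precisely the content of Proposition 8.8, which you should cite rather than reconstruct from 8.6--8.7.
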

 
 \begin{proof}
     This follows from the homotopy equivalence
     \[\on{holim}(\Z/\ell)_{\infty}(\textrm{\'Et}(B_k\mc{G}))\xrightarrow{\sim} \on{holim}(\Z/\ell)_{\infty}(\on{Sing}(B\mc{G}(\C)),\]
     established in \cite[Proposition 8.8]{friedlander1982etale}, where $\on{Sing}(-)$ denotes the singular complex.
 \end{proof}

\section{Non-algebraicity criterion}\label{sec3}

The purpose of this section is the proof of \Cref{h1-trick}, which provides a criterion for the existence of non-algebraic geometrically trivial $\ell$-adic cohomology classes for smooth projective varieties over finite fields. The key technical ingredient for the proof of \Cref{h1-trick} comes from \Cref{hochschild-serre-steenrod}, which clarifies the behavior of Steenrod operations in the mod $\ell$ Hochschild-Serre spectral sequence. In the case of a finite field $\F$, \Cref{hochschild-serre-steenrod} takes a particularly simple form; see \Cref{hochschild-serre-steenrod-finite-fields}. The reader willing to take on faith the compatibility of the Steenrod operations with the Hochschild-Serre spectral sequence should skip to the statement of \Cref{hochschild-serre-steenrod-finite-fields}.

\subsection{Steenrod operations and the Hochschild-Serre spectral sequence}
Let $k$ be a field, $\ell$ be a prime number, let $X_{\bullet}=(X_n)_{n\geq 0}$ be a simplicial scheme, and suppose that $X_n$ is a $k$-scheme of finite type for all $n\geq 0$. We write $\ov{X}_{\bullet}\coloneqq X_{\bullet}\times_k\ov{k}$. The discussion of \Cref{define-steenrod} yields Steenrod operations on $H^*(X_{\bullet},\Z/\ell)$ and on $H^*(\ov{X}_{\bullet},\Z/\ell)$. 
The two cohomology rings are related by the Hochschild-Serre spectral sequence
    \begin{equation}\label{ss-hochschild-serre-limit}
    E_2^{i,j}\coloneqq H^i(k, H^j(\ov{X}_{\bullet},\Z/\ell))\Rightarrow H^{i+j}(X_{\bullet},\Z/\ell).
    \end{equation}
This is the Grothendieck spectral sequence for the composition of the functors of Galois-equivariant global sections $H^0(\ov{X}_{\bullet},-)$ and of $G$-invariants. We may also view (\ref{ss-hochschild-serre-limit}) as the direct limit of the spectral sequences
    \begin{equation}\label{ss-hochschild-serre}
    E_2^{i,j}\coloneqq H^i(\on{Gal}(K/k), H^j((X_{\bullet})_K,\Z/\ell))\Rightarrow H^{i+j}(X_{\bullet},\Z/\ell)
    \end{equation}
defined in a similar way, where $K/k$ ranges over all finite Galois subextensions of $\ov{k}/k$.

For the proof of Theorems \ref{mainthm-tate} and \ref{mainthm}, it will be important to understand the compatibility of Steenrod operations with (\ref{ss-hochschild-serre-limit}).

Let $\Gamma$ be a finite group. If $S_{\bullet}$ is a simplicial set with a simplicial $\Gamma$-action, we denote by $(S_{\bullet})_{h\Gamma}$ the homotopy quotient of $S_{\bullet}$ by $\Gamma$, namely
    \[(S_{\bullet})_{h\Gamma}\coloneqq (S_{\bullet}\times E\Gamma)/\Gamma,\]
    where $E\Gamma$ is a weakly contractible simplicial set with a free $\Gamma$-action. The projection $(S_{\bullet}\times E\Gamma)\to (S_{\bullet})_{h\Gamma}$ is a principal $\Gamma$-fibration in the sense of \cite[Definition 5.3]{friedlander1982etale}. By construction, we have a fibration of simplicial sets
    \[S_{\bullet}\to (S_{\bullet})_{h\Gamma}\to B\Gamma,\]
    where $B\Gamma\coloneqq E\Gamma/\Gamma$ is a model for the simplicial classifying space for $\Gamma$.  
    We have the Serre spectral sequence
    \begin{equation}\label{ss-hom-quot-bgamma}E_2^{i,j}\coloneqq H^i(B\Gamma, H^j(S_{\bullet},\Z/\ell))\Rightarrow H^{i+j}((S_{\bullet})_{h\Gamma},\Z/\ell),\end{equation}
    where $H^j(S_{\bullet},\Z/\ell)$ is viewed as a local system over $B\Gamma$,
    or equivalently the Grothendieck spectral sequence
    \begin{equation}\label{ss-hom-quot}E_2^{i,j}\coloneqq H^i(\Gamma, H^j(S_{\bullet},\Z/\ell))\Rightarrow H^{i+j}((S_{\bullet})_{h\Gamma},\Z/\ell)\end{equation}
    where $H^i(\Gamma, -)$ denotes group cohomology and $H^j(S_{\bullet},\Z/\ell)$ is now viewed as a $\Gamma$-representation. 
    
    Let $I$ be a left directed category and $\mathcal{S}_{\bullet}=((S_i)_{\bullet})_{i\in I}$ is a pro-simplicial set with a $\Gamma$-action: $\Gamma$ acts on every $(S_i)_{\bullet}$ via simplicial automorphisms and the transition morphisms $(S_i)_{\bullet}\to (S_j)_{\bullet}$ are $\Gamma$-equivariant. We define the homotopy quotient of $\mathcal{S}_{\bullet}$ by the $\Gamma$-action as \[(\mathcal{S}_{\bullet})_{h\Gamma}\coloneqq ((S_i)_{\bullet})_{h\Gamma})_{i\in I},\]
    that is, the pro-simplicial set associated to the inverse system of homotopy quotients.
    
    By definition, $H^*(\mathcal{S}_{\bullet},\Z/\ell)$ is the direct limit of $H^j((S_i)_{\bullet},\Z/\ell)$ over $i\in I$, and similarly for $(\mathcal{S}_{\bullet})_{h\Gamma}$. Since group cohomology commutes with direct limits, by passing to the direct limit in (\ref{ss-hom-quot}) we obtain a spectral sequence
    \begin{equation}\label{ss-pro-hom-quot}
    E_2^{i,j}\coloneqq H^i(\Gamma, H^j(\mathcal{S}_{\bullet},\Z/\ell))\Rightarrow H^{i+j}((\mathcal{S}_{\bullet})_{h\Gamma},\Z/\ell).
    \end{equation}

    \begin{lem}\label{cofinal}
The rigid \'etale hypercoverings of $(X_{\bullet})_K$ that are defined over $k$ are cofinal in $\on{HRR}((X_{\bullet})_K)$. In particular, we have a canonical $\on{Gal}(K/k)$-equivariant isomorphism
\[H^*(\et((X_{\bullet})_K),A)\simeq \on{colim} H^*(\pi(\Delta((U_{\bullet\bullet})_K)),A),\]
where the colimit is over $\on{HRR}(X_{\bullet})$.
\end{lem}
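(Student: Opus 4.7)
The plan is to prove cofinality directly; the cohomological identification is then automatic, since $H^{*}(\et(-),A)$ is defined as a colimit over $\on{HRR}(-)$ and any colimit is unchanged when restricted to a cofinal subcategory. Concretely, I need to show that for every $V_{\cdot\cdot}\in\on{HRR}((X_{\cdot})_K)$, there exists $U_{\cdot\cdot}\in\on{HRR}(X_{\cdot})$ together with a morphism $(U_{\cdot\cdot})_K\to V_{\cdot\cdot}$ in $\on{HRR}((X_{\cdot})_K)$. That the base change $B\colon U_{\cdot\cdot}\mapsto (U_{\cdot\cdot})_K$ defines a functor $\on{HRR}(X_{\cdot})\to\on{HRR}((X_{\cdot})_K)$ is essentially formal: the chosen geometric basepoint $x\colon\on{Spec}(\Omega)\to X_0$ lifts canonically through the finite \'etale map $(X_0)_K\to X_0$ (as $K\subset\Omega$), and the rigidification data transport through this \'etale base change.

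The main tool is Weil restriction. Since $K/k$ is a finite separable extension, $(X_{\cdot})_K\to X_{\cdot}$ is finite \'etale, and for every \'etale $(X_n)_K$-scheme $V_n$ the Weil restriction $W_n:=\on{Res}_{(X_n)_K/X_n}(V_n)$ is an \'etale $X_n$-scheme, with the counit of the adjunction $\on{Hom}_{(X_n)_K}((-)_K,V_n)=\on{Hom}_{X_n}(-,W_n)$ providing a canonical $(X_n)_K$-morphism $(W_n)_K\to V_n$. As a right adjoint, Weil restriction commutes with the coskeletal matching constructions, so applying it levelwise to $V_{\cdot\cdot}$ yields an \'etale hypercovering $W_{\cdot\cdot}\to X_{\cdot}$ together with a canonical morphism $(W_{\cdot\cdot})_K\to V_{\cdot\cdot}$. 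Applying Friedlander's rigidification to $W_{\cdot\cdot}$ then produces $U_{\cdot\cdot}\in\on{HRR}(X_{\cdot})$ with a refinement $U_{\cdot\cdot}\to W_{\cdot\cdot}$, and base changing and composing with the counit gives the sought morphism $(U_{\cdot\cdot})_K\to V_{\cdot\cdot}$.

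For the $\on{Gal}(K/k)$-equivariance of the cohomological isomorphism: the Galois group acts on each $(U_{\cdot\cdot})_K=U_{\cdot\cdot}\times_k\on{Spec}(K)$ through the second factor, inducing an action on $\pi(\Delta((U_{\cdot\cdot})_K))$ and hence on $H^{*}(\pi(\Delta((U_{\cdot\cdot})_K)),A)$. By naturality of the identification \eqref{identification}, this matches the standard Galois action on $H^{*}((X_{\cdot})_K,A)$; since $\on{HRR}(X_{\cdot})$ carries the trivial Galois action and the base change functor is trivially equivariant, passing to the colimit preserves equivariance.

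The main obstacle will be the bookkeeping: verifying that Weil restriction along a finite \'etale map genuinely preserves the hypercovering condition (matching-object surjectivity together with \'etaleness) and interacts well with Friedlander's rigidification. Both reduce formally to Weil restriction being a right adjoint preserving \'etale morphisms and commuting with coskeletal constructions, but the verification requires some care to handle the rigid structures cleanly.
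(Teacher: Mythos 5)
Your descent mechanism is, at bottom, the same as the paper's: the Weil restriction satisfies $(\on{Res}_{(X_n)_K/X_n}V)\times_{X_n}(X_n)_K\simeq\prod_{\sigma\in\on{Gal}(K/k)}{}^{\sigma}V$ (product over $(X_n)_K$), which is precisely the Galois-stable refinement of $V$ that the paper produces as a ``level-wise Galois closure''. But the proof architecture is genuinely different. The paper never descends the whole hypercovering by a functor: it isolates the class $\mc{B}$ of rigid coverings of $(X_{\cdot})_K$ defined over $k$, checks that every covering is refined by one in $\mc{B}$ and that $\mc{B}$ is stable in the required sense, and then invokes the general refinement lemma \cite[0DAV]{stacks-project}, which packages all the bookkeeping about matching objects and cofinality of hypercoverings built from a restricted class of coverings. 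Your construction buys an explicit functor from hypercoverings of $(X_{\cdot})_K$ to hypercoverings of $X_{\cdot}$, at the price of having to verify by hand everything that \cite[0DAV]{stacks-project} delivers for free.

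Two of the deferred verifications are genuine gaps rather than bookkeeping. First, representability: $\on{Res}_{T/S}(V)$ along a finite \'etale $T\to S$ is known to be a scheme only when every finite set of points of $V$ over a point of $S$ lies in an affine open (\'etale-locally on $S$ it is a fibre product of the Galois twists of $V$, but the resulting descent datum need not be effective in schemes). The levels of an arbitrary \'etale hypercovering are merely surjective \'etale $(X_n)_K$-schemes --- not necessarily separated, let alone quasi-projective --- and the condition can fail (e.g.\ for a doubled-origin cover). So you must first refine $V_{\cdot\cdot}$ by a hypercovering with affine levels, which is itself a refinement step of exactly the kind \cite[0DAV]{stacks-project} is designed to handle. (Relatedly, ``right adjoints commute with matching objects'' gives you the map to the matching object but not its surjectivity; that $\on{Res}$ along finite \'etale preserves surjective \'etale maps is true but is a separate check, not a formal consequence of adjunction.) Second, rigidity: morphisms in $\on{HRR}$ are \emph{rigid} morphisms, the base change $(U_{\cdot\cdot})_K$ of a rigid hypercovering is not literally rigid over $(X_{\cdot})_K$ (the indexing by geometric points changes under the field extension), and your composite $(U_{\cdot\cdot})_K\to (W_{\cdot\cdot})_K\to V_{\cdot\cdot}$ is not visibly a morphism in the index category of the colimit. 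Since the statement is cofinality \emph{in} $\on{HRR}((X_{\cdot})_K)$, this is the actual content of the lemma and cannot be left as bookkeeping; it is what the combination of rigidification \cite[Corollary 4.6]{friedlander1982etale} and \cite[0DAV]{stacks-project} is doing in the paper's proof.
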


\begin{proof}
Let $\mc{C}$ be the site of \'etale coverings of $(X_{\bullet})_K$ by simplicial schemes, and $\mc{B}$ be the collection of all rigid coverings $Y_{\bullet}\to (X_{\bullet})_K$ that are defined over $k$. 
If $Y_{\bullet}\to (X_{\bullet})_K$ is a rigid covering, the level-wise Galois closure $\tilde{Y}_{\bullet}\to Y_{\bullet}$ is a rigid covering of $Y_{\bullet}$ defined over $k$. Moreover, if $\{(Y_j)_{\bullet}\to Y_{\bullet}\}$ is an \'etale covering of $Y_{\bullet}$ and $Z_{\bullet}\to Y_{\bullet}$ is a rigid covering, then $\{(Y_j)_{\bullet}\to Y_{\bullet}\}\coprod \{Z_{\bullet}\to Y_{\bullet}\}$ is also an \'etale covering of $Y_{\bullet}$. Therefore, the pair $(\mc{C},\mc{B})$ satisfies the assumptions of \cite[0DAV]{stacks-project}. The conclusion now follows from \cite[0DAV]{stacks-project}.
\end{proof}

Let $K/k$ be a finite Galois extension, and let $f\colon (X_{\bullet})_K\to X_{\bullet}$ be the projection map. The group $\on{Gal}(K/k)$ acts on $(X_{\bullet})_K$ via its action on $\on{Spec}(K)$. By \Cref{cofinal}, we may write $\textrm{\'Et}((X_{\bullet})_K)$ as the pro-simplicial set associated to an inverse system of simplicial sets endowed with compatible $\on{Gal}(K/k)$-actions. Therefore, we may let $\mathcal{S}_{\bullet}=\textrm{\'Et}((X_{\bullet})_K)$ and $\Gamma=\on{Gal}(K/k)$ in the previous discussion. The spectral sequence (\ref{ss-pro-hom-quot}) specializes to:
    \begin{equation}\label{ss-etale-hom-quot}
    E_2^{i,j}\coloneqq H^i(\on{Gal}(K/k), H^j(\textrm{\'Et}((X_{\bullet})_K),\Z/\ell))\Rightarrow H^{i+j}(\textrm{\'Et}((X_{\bullet})_K)_{h\on{Gal}(K/k)},\Z/\ell).
    \end{equation}
    
    \begin{lem}\label{map-of-ss}
    For every finite Galois extension $K/k$, there exists an isomorphism between the spectral sequences (\ref{ss-hochschild-serre}) and (\ref{ss-etale-hom-quot}), which is functorial with respect to $K$. 
    \end{lem}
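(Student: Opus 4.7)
The plan is to identify both spectral sequences with the Serre spectral sequence of the pro-simplicial fibration
\[\et(X_K)\longrightarrow \et(X_K)_{h\on{Gal}(K/k)}\longrightarrow B\on{Gal}(K/k),\]
using Friedlander's comparison (\ref{identification}) to bridge the \'etale and homotopy-theoretic descriptions.

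First I would match the $E_2$ pages. By \Cref{cofinal}, the $\on{Gal}(K/k)$-action on $X_K$ lifts to a level-wise action on a cofinal family of rigid hypercoverings, hence defines a compatible action on the pro-simplicial set $\et(X_K)$. Applying (\ref{identification}) level-wise and passing to the colimit yields a $\on{Gal}(K/k)$-equivariant isomorphism $H^j(X_K,\Z/\ell)\simeq H^j(\et(X_K),\Z/\ell)$, and taking group cohomology identifies the $E_2$ pages of (\ref{ss-hochschild-serre}) and (\ref{ss-etale-hom-quot}). For the abutments, I would use that $X_K\to X$ is a $\on{Gal}(K/k)$-Galois \'etale cover, so Friedlander's interpretation of Galois covers as principal fibrations of \'etale topological types \cite{friedlander1982etale} furnishes a weak equivalence $\et(X_K)_{h\on{Gal}(K/k)}\simeq \et(X)$; combined with (\ref{identification}) applied to $X$ itself, this identifies both abutments with $H^*(X,\Z/\ell)$.

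To promote these $E_2$ and $E_\infty$ identifications to an isomorphism of spectral sequences, I would exhibit both as arising from a single filtered complex. Concretely, take a $\on{Gal}(K/k)$-equivariant Eilenberg--MacLane cochain model of $\et(X_K)$ and form the associated Borel construction; the resulting double complex, with its standard filtration, has for its spectral sequence the Serre SS (\ref{ss-etale-hom-quot}). Under the identifications above, the same filtered complex realizes the Cartan--Eilenberg construction of the Grothendieck spectral sequence for the composition of $H^0(\et(X_K),-)$ with $\on{Gal}(K/k)$-invariants, which is (\ref{ss-hochschild-serre}). Functoriality in $K$ follows from functoriality of $\et$ and of the Borel construction under the surjections $\on{Gal}(K'/k)\twoheadrightarrow \on{Gal}(K/k)$ for $K\subset K'$.

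The main obstacle is the last step: the filtration underlying the Cartan--Eilenberg construction of the Hochschild--Serre SS and the skeletal filtration of $B\on{Gal}(K/k)$ in the Serre SS look different a priori, and reconciling them requires carefully tracing through the double complex. The cleanest route is to realize both as instances of the Grothendieck spectral sequence associated to the composite of $H^0(\et(X_K),-)$ and $\on{Gal}(K/k)$-invariants, so that the comparison of filtrations becomes automatic once the identifications on $E_2$ and on abutments have been checked.
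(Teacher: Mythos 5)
Your high-level diagnosis is right: isomorphisms on $E_2$ and on the abutment do not determine an isomorphism of spectral sequences, and the way out is to realize both sides as Grothendieck spectral sequences for the composite of a global-sections functor with $\on{Gal}(K/k)$-invariants. But the proposal stops exactly where the real work begins. The claim that, once this is done, ``the comparison of filtrations becomes automatic once the identifications on $E_2$ and on abutments have been checked'' is not correct as stated: two Grothendieck spectral sequences agree only if the underlying functors (or their derived functors, \emph{as $\delta$-functors}) are identified, and here the two sides live over genuinely different categories. The Hochschild--Serre spectral sequence (\ref{ss-hochschild-serre}) is the Grothendieck spectral sequence for \'etale global sections on the abelian category of sheaves on $(X_{\cdot})_K$, while (\ref{ss-etale-hom-quot}) is by construction a \emph{colimit over rigid hypercoverings} of Grothendieck spectral sequences (\ref{ss-hom-quot}) for cochain functors on simplicial sets $\pi(\Delta((U_{\cdot\cdot})_K))$. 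The isomorphism (\ref{identification}) only identifies the individual cohomology groups; to compare the spectral sequences you need (a) Friedlander's hypercovering theorem in the strong form of an isomorphism of $\delta$-functors between $H^*((X_{\cdot})_K,-)$ and the colimit of $H^*((-)((U_{\cdot\cdot})_K))$, which transports the Grothendieck spectral sequence across, and (b) the bisimplicial Eilenberg--Zilber theorem of Dold--Puppe giving a \emph{natural chain homotopy equivalence} between the total complex of the Moore bicomplex of a hypercovering and its diagonal, again upgraded to an isomorphism of $\delta$-functors, so that the hypercovering-level Grothendieck spectral sequence becomes the one for $\pi(\Delta((U_{\cdot\cdot})_K))$. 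Neither ingredient appears in your argument, and without them there is no morphism of spectral sequences to speak of.

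A secondary point: routing the comparison through the Serre spectral sequence of the fibration $\et(X_K)\to \et(X_K)_{h\on{Gal}(K/k)}\to B\on{Gal}(K/k)$ and its skeletal filtration creates precisely the filtration-matching problem you flag as the main obstacle, and it is avoidable. Since (\ref{ss-etale-hom-quot}) is \emph{defined} as a colimit of the group-cohomology Grothendieck spectral sequences (\ref{ss-hom-quot}), the comparison with (\ref{ss-hochschild-serre}) can be carried out entirely within the world of Grothendieck spectral sequences for composite functors, indexed by hypercoverings; the Borel construction and $B\on{Gal}(K/k)$ never need to enter. (Your use of \Cref{cofinal} to put the Galois action on a cofinal system of hypercoverings, and the identification of the abutment via the principal-fibration interpretation of the Galois cover, are both fine and are where the paper uses them too.)
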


    \begin{proof}
    If $A_{\bullet\bullet}$ is a bisimplicial abelian group, we denote by $M(A_{\bullet\bullet})$ the Moore chain bicomplex corresponding to $A_{\bullet\bullet}$ (see \cite[p. 205]{goerss2009simplicial}), and by $\on{Tot}(M(A_{\bullet\bullet}))$ the associated total complex. (In \cite{goerss2009simplicial}, the Moore complex of $A_{\bullet\bullet}$ is also denoted by $A_{\bullet\bullet}$.) By definition, $H^*(A_{\bullet\bullet})$ is the cohomology of the chain complex $\on{Tot}(M(A_{\bullet\bullet}))$; this is in accordance with \cite[p. 20]{friedlander1982etale}. (Recall that the cohomology of a chain complex is computed by first applying $\on{Hom}(-,A)$ in each degree, and then taking cohomology of the resulting cochain complex.) Since the total complex and Moore bicomplex functors are exact, this defines a $\delta$-functor on the category of bisimplicial abelian groups. If $A^{\bullet\bullet}$ is a bicosimplicial abelian group, we define dually the Moore cochain bicomplex $M(A^{\bullet\bullet})$, and let  $H^*(A^{\bullet\bullet})$ be the cohomology of the total complex of $M(A^{\bullet\bullet})$.
    
    If $F$ is an \'etale sheaf of abelian groups on a bisimplicial scheme $Y_{\bullet\bullet}$, we denote by $F(Y_{\bullet\bullet})$ the bicosimplicial abelian group obtained by taking global sections level-wise.  By \cite[Theorem 3.8]{friedlander1982etale}, we have an isomorphism of $\delta$-functors on $\on{AbSh}(X_{\bullet})$:
    \[H^*((X_{\bullet})_K,-)\xrightarrow{\sim} \on{colim} H^*((-)((U_{\bullet\bullet})_K)),\]
    where the colimit is over all \'etale hypercoverings $U_{\bullet\bullet}\to X_{\bullet}$. (The right hand side is a $\delta$-functor because the colimit functor is exact.) Since group cohomology commutes with colimits, this induces an isomorphism of spectral sequences from (\ref{ss-hochschild-serre}) to the colimit of the spectral sequences
    \begin{equation}\label{ss1}
    E_2^{i,j}\coloneqq H^i(\on{Gal}(K/k), H^j((\Z/\ell)((U_{\bullet\bullet})_K)))\Rightarrow H^{i+j}((\Z/\ell)(U_{\bullet\bullet})),
    \end{equation}
    as $U_{\bullet\bullet}$ ranges over all \'etale hypercoverings of $X_{\bullet}$. The spectral sequences (\ref{ss1}) are defined as the Grothendieck spectral sequences for the composition of $(-)((U_{\bullet\bullet})_K)$ and of the functor of  $\on{Gal}(K/k)$-invariants. 

    Let $A_{\bullet\bullet}$ be a bisimplicial abelian group, $\Delta(A_{\bullet\bullet})$ be the diagonal simplicial abelian group, and $d(A_{\bullet\bullet})$ be the chain complex corresponding to the simplicial diagonal $\Delta(A_{\bullet\bullet})$ via the Dold-Kan correspondence. (In \cite[p. 205]{goerss2009simplicial}, both notions are denoted by $d(A_{\bullet\bullet})$.) By the bisimplicial Eilenberg-Zilber Theorem \cite[Theorem IV.2.3]{goerss2009simplicial}, proved by Dold and Puppe, we have a natural chain homotopy equivalence $\on{Tot}(M(A_{\bullet\bullet}))\simeq d(A_{\bullet\bullet})$. In particular, if $S_{\bullet\bullet}$ is a bisimplicial set, letting $A_{\bullet\bullet}=\Z(S_{\bullet\bullet})$ we get a natural chain homotopy equivalence 
    \begin{equation}\label{dold-puppe}
    \on{Tot}(M(\Z(S_{\bullet\bullet})))\simeq d(\Z(S_{\bullet\bullet}))=\Z(\Delta(S_{\bullet\bullet})).\end{equation}
     Let $U_{\bullet\bullet}\to X_{\bullet}$ by an \'etale hypercovering, $A$ an abelian group. We also denote by $A$ the corresponding constant \'etale sheaf on $X_{\bullet}$ and $U_{\bullet\bullet}$. Then \[A(\Delta((U_{\bullet\bullet})_K))=\on{Hom}(\Z(\pi(\Delta((U_{\bullet\bullet})_K))),A),\] hence by (\ref{dold-puppe}) we obtain an isomorphism of $\delta$-functors 
    \[H^*((-)((U_{\bullet\bullet})_K))\xrightarrow{\sim}H^*(\pi(\Delta((U_{\bullet\bullet})_K)),-)\]
    from the abelian category of constant abelian groups (or constant abelian \'etale sheaves on $U_{\bullet\bullet}$) to the abelian category of $\on{Gal}(K/k)$-modules.
    
    Letting $A=\Z/\ell$, we deduce that (\ref{ss1}) is naturally isomorphic to the colimit of the Grothendieck spectral sequences
    \begin{equation}\label{ss2}
    E_2^{i,j}\coloneqq H^i(\on{Gal}(K/k), H^j(\pi(\Delta((U_{\bullet\bullet})_K)),\Z/\ell))\Rightarrow H^{i+j}(\pi(\Delta(U_{\bullet\bullet})),\Z/\ell),
    \end{equation}
    as $U_{\bullet\bullet}$ ranges over all \'etale hypercoverings of $X_{\bullet}$. By \cite[Corollary 4.6]{friedlander1982etale}, in the spectral sequence (\ref{ss2}) we may equivalently let $U_{\bullet\bullet}$ range over all rigid hypercoverings of $X_{\bullet}$. By construction, this is the spectral sequence (\ref{ss-etale-hom-quot}).
    \end{proof}

\begin{thm}\label{hochschild-serre-steenrod}
    Let $k$ be a field, let $X_{\bullet}=(X_n)_{n\geq 0}$ be a simplicial scheme, such that $X_n$ is of finite type over $k$ for all $n\geq 0$, and let $\ell$ be a prime number. On the Hochschild-Serre spectral sequence (\ref{ss-hochschild-serre-limit})
    there are, for all integers $s,i\geq 0$ and all $2\leq r\leq \infty$, homomorphisms
    \[\prescript{}{F}{\on{P}}^s\colon E_r^{a,b}\to E_r^{a,b+2s(\ell-1)},\quad \prescript{}{F}{\beta \on{P}}^s\colon E_r^{a,b}\to E_r^{a,b+2s(\ell-1)+1},\quad 0\leq 2s\leq b\]
    \[\prescript{}{B}{\on{P}}^s\colon E_r^{a,b}\to E_{t}^{a+(2s-b)(\ell-1),\ell b},\quad \prescript{}{B}{\beta \on{P}}^s\colon E_r^{a,b}\to E_{u}^{a+(2s-b)(\ell-1)+1,\ell b},\quad b\leq 2s\]
    when $\ell$ is odd, and 
    \[\prescript{}{F}{\on{Sq}}^i\colon E_r^{a,b}\to E_r^{a,b+i},\qquad 0\leq i\leq b\] \[\prescript{}{B}{\on{Sq}}^i\colon E_r^{a,b}\to E_{v}^{a+i-b,2b},\qquad b\leq i\]
    when $\ell=2$. Here we have defined
\begin{equation*}
    t\coloneqq \begin{cases}
    r-1+(2s-b)(\ell-1), & b \leq 2s< b+r-2, \\
    \ell(r-2)+1, & b+r-2\leq 2s,
    \end{cases}
\end{equation*}
\begin{equation*}
    u\coloneqq \begin{cases}
    r+(2s-b)(\ell-1), & b \leq 2s< b+r-2, \\
    \ell(r-2)+1, & b+r-2\leq 2s,
    \end{cases}
\end{equation*}
\begin{equation*}
    v\coloneqq \begin{cases}
    r+i-b, & b \leq i\leq b+r-2, \\
    2r-2, & b+r-2\leq i.
    \end{cases}
\end{equation*}
    Note that $t=u=v=2$ if $r=2$ and $t=u=v=\infty$ if $r=\infty$. 
    
    These operations satisfy the following properties. 
    
    (i) For all $2\leq r\leq \infty$, the operations on the $E_r$ page are determined by those on the $E_2$ page. More precisely, suppose that $x\in E_2^{a,b}$ survives to the $E_r$ page, let $[x]\in E_r^{a,b}$ be the image of $x$, and let $P\colon E_r^{a,b}\to E_w^{c,d}$ be one of $\prescript{}{F}{\on{P}}^s$, $\prescript{}{F}{\beta \on{P}}^s$, $\prescript{}{B}{\on{P}}^s$, $\prescript{}{B}{\beta \on{P}}^s$, $\prescript{}{F}{\on{Sq}}^i$ or $\prescript{}{B}{\on{Sq}}^i$. (The indices $c,d,w$ have been written out earlier in the statement.)  Then $P(x)$ survives to the $E_w$ page and $[P(x)]=P([x])$. 
    
    (ii) For every integer $n\geq 0$, let
    \[\{0\}=F^{n+1}H^n(X_{\bullet},\Z/\ell)\subset F^{n}H^n(X_{\bullet},\Z/\ell)\subset\cdots\subset  F^{0}H^n(X_{\bullet},\Z/\ell)=H^n(X_{\bullet},\Z/\ell)\] be the filtration of $H^n(X_{\bullet},\Z/\ell)$ induced by (\ref{ss-hochschild-serre-limit}), with projection maps
    \[\rho\colon F^aH^{a+b}(X_{\bullet},\Z/\ell)\to E_{\infty}^{a,b}.\]
    For all integers $s,i\geq 0$, write $\on{P}^s$, $\beta\on{P}^s$ and $\on{Sq}^i$ for the \'etale Steenrod operations on $H^n(X_{\bullet},\Z/\ell)$. Then, when $\ell$ is odd, we have commutative squares
    \[
    \begin{tikzcd}
    F^aH^{a+b+2s(\ell-1)}(X_{\bullet},\Z/\ell) \arrow[r,"\rho"] & E_{\infty}^{a,b+2s(\ell-1)} \\
    F^aH^{a+b}(X_{\bullet},\Z/\ell) \arrow[r,"\rho"] \arrow[u, "\on{P}^s"] & E_{\infty}^{a,b} \arrow[u, "\prescript{}{F}{\on{P}}^s"]
    \end{tikzcd}
    \]
    when $2s\leq b$, and
    \[
    \begin{tikzcd}
    F^{a+(2s-b)(\ell-1)}H^{a+b+2s(\ell-1)}(X_{\bullet},\Z/\ell) \arrow[r,"\rho"] & E_{\infty}^{a+(2s-b)(\ell-1),\ell b} \\
    F^aH^{a+b}(X_{\bullet},\Z/\ell) \arrow[r,"\rho"] \arrow[u, "\on{P}^s"] & E_{\infty}^{a,b} \arrow[u, "\prescript{}{B}{\on{P}}^s"]
    \end{tikzcd}
    \]
    when $2s\geq b$. The two squares coincide for $2s=b$. We have similar commutative squares with $\on{P}^s$ replaced by $\beta \on{P}^s$. When $\ell=2$, we have the commutative squares
    \[
    \begin{tikzcd}
    F^aH^{a+b+i}(X_{\bullet},\Z/\ell) \arrow[r,"\rho"] & E_{\infty}^{a,b+i} \\
    F^aH^{a+b}(X_{\bullet},\Z/\ell) \arrow[r,"\rho"] \arrow[u, "{\on{Sq}}^i"] & E_{\infty}^{a,b} \arrow[u, "\prescript{}{F}{\on{Sq}}^i"]
    \end{tikzcd}
    \]
    when $i\leq b$ and
    \[
    \begin{tikzcd}
    F^{a+i-b}H^{a+i+b}(X_{\bullet},\Z/\ell) \arrow[r,"\rho"] & E_{\infty}^{a+i-b, 2b} \\
    F^aH^{a+b}(X_{\bullet},\Z/\ell) \arrow[r,"\rho"] \arrow[u, "{\on{Sq}}^i"] & E_{\infty}^{a,b} \arrow[u, "\prescript{}{B}{\on{Sq}}^i"]
    \end{tikzcd}
    \]
    when $i\geq b$. The two squares coincide for $i=b$.
    
(iii) Suppose $r=2$. Then for all $s\geq 0$ the operations \[\prescript{}{F}{\on{P}}^s\colon H^a(k,H^b(\ov{X}_{\bullet},\Z/\ell))\to H^a(k,H^{b+2s(\ell-1)}(\ov{X}_{\bullet},\Z/\ell))\]
    are induced by $\on{P}^s\colon H^b(\ov{X}_{\bullet},\Z/\ell)\to H^{b+2s(\ell-1)}(\ov{X}_{\bullet},\Z/\ell)$, and similarly for $\prescript{}{F}{\beta P}^s$ and $\prescript{}{F}{\on{Sq}}^i$.
    \end{thm}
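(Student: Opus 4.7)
The plan is to reduce the statement to the analogous classical result about the behavior of mod $\ell$ Steenrod operations on the Serre spectral sequence of a Serre fibration of simplicial sets. The key bridge is \Cref{map-of-ss}: for each finite Galois extension $K/k$, the Hochschild-Serre spectral sequence (\ref{ss-hochschild-serre}) is naturally isomorphic to the Serre spectral sequence (\ref{ss-etale-hom-quot}) associated to the fibration
\[
\textrm{\'Et}(X_K)\to \textrm{\'Et}(X_K)_{h\Gal(K/k)}\to B\Gal(K/k)
\]
of pro-simplicial sets. Passing to the colimit over all finite Galois subextensions $K/k$ of $\ov{k}/k$ identifies (\ref{ss-hochschild-serre-limit}) with the colimit of the Serre spectral sequences. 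Since by \Cref{define-steenrod} the \'etale Steenrod operations on $H^*(X,\Z/\ell)$ and $H^*(\ov{X},\Z/\ell)$ are defined as colimits of classical Steenrod operations on the singular cohomology of rigid hypercoverings, it suffices to construct operations with the stated bidegrees on each finite-level Serre spectral sequence, compatibly with refinement of hypercoverings and with enlargement of $K$.

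For a fixed $K$ and hypercovering, what is then needed is the existence and basic properties of mod $\ell$ Steenrod operations in the Serre spectral sequence of a simplicial fibration $F\to E\to B$. This is a classical theorem of V\'azquez, Araki and Kudo; a complete treatment at the level of generality required here is provided by Singer's work on Steenrod operations in spectral sequences, and by Chapter~4 of McCleary's monograph. Concretely, on each $E_r$-page one constructs fiber operations $\prescript{}{F}{\on{P}}^s$ and base operations $\prescript{}{B}{\on{P}}^s$ (together with their Bockstein composites, and the analogous Steenrod squares when $\ell=2$) with the bidegree shifts stated in the theorem. On the $E_2$-page, where $E_2^{a,b}=H^a(B,H^b(F,\Z/\ell))$ with local coefficient system $H^b(F,\Z/\ell)$, the fiber operations are induced by applying the ordinary Steenrod operation $\on{P}^s$ on $H^b(F,\Z/\ell)$ coefficient-wise, while the base operations arise from the Steenrod operations on $H^*(B,-)$ combined with a Kudo-type doubling phenomenon that produces a vertical degree of $\ell b$ and a jump in horizontal degree $(2s-b)(\ell-1)$ when $2s\geq b$.

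Assertion (iii) is immediate from this $E_2$-level description. For (i), one proves by induction on $r$ that each operation descends from $E_2$ to $E_r$: since the operations come from genuine cohomology operations on the filtered cochain complex of the total space, they commute with the spectral sequence differentials up to the recorded bidegree shifts, so cycles go to cycles and boundaries to boundaries. Property (ii) follows by tracing the construction through the filtration on $H^*(E,\Z/\ell)$: a class in $F^aH^{a+b}(X,\Z/\ell)$ is represented by a cocycle of bidegree $(a,b)$, and the horizontal jump in the base operations reflects precisely the amount by which the filtration is raised. Finally, the operations and all compatibilities pass to the colimit over $K$ by \Cref{cofinal}, since rigid hypercoverings of $(X_\cdot)_K$ defined over $k$ are cofinal, so the Serre structure is preserved. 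The main obstacle is essentially bookkeeping: matching the indexing conventions of the topological literature with the filtration-and-bidegree setup here, verifying that the bidegree shift $(2s-b)(\ell-1)$ for the base operations agrees with the abutment jump claimed in (ii), and checking that the pro-simplicial structure of $\textrm{\'Et}(X_K)$ and the change-of-$K$ maps do not introduce obstructions beyond the classical fibration case.
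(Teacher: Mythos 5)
Your proposal follows essentially the same route as the paper: reduce via \Cref{map-of-ss} and the cofinality statement of \Cref{cofinal} to the Serre spectral sequence of the fibration $\textrm{\'Et}(X_K)\to \textrm{\'Et}(X_K)_{h\on{Gal}(K/k)}\to B\on{Gal}(K/k)$, import the classical construction of Steenrod operations on that spectral sequence together with properties (i)--(iii), transport them through the isomorphism, and pass to the colimit over $K$. The only point where you should be more careful is the choice of references: the paper deliberately avoids Araki's construction (built on Serre's cubical model) because its agreement with the operations defined via Dress's bisimplicial-coalgebra model is asserted but not proved in the literature, and instead relies on Singer for $\ell=2$ and Sawka for odd $\ell$ (Mori's and Singer's original codomains for the base operations being incorrect and corrected in later work), which is precisely what makes the bidegree bookkeeping in the statement come out right.
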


Our applications do not rely on the full strength of \Cref{hochschild-serre-steenrod}, but only on its simpler consequence \Cref{hochschild-serre-steenrod-finite-fields}. In particular, we will not need the operations with prescript $B$, nor the definitions of $t,u,v$, but we included them for completeness and possible future use.

In (ii), the fact that $\on{P}^s$, $\beta\on{P}^s$ and $\on{Sq}^i$ respect the spectral sequence filtration of $H^*(X_{\bullet},\Z/\ell)$ as described in the commutative squares is part of the statement. 

\begin{proof}
    Let $F\to E\to B$ be a (topological) Serre fibration, and consider the Serre spectral sequence
    \begin{equation}\label{top-sss}
    E_2^{i,j}\coloneqq H^i_{\on{sing}}(B,\underline{H}_{\on{sing}}^j(F,\Z/\ell))\Rightarrow H_{\on{sing}}^{i+j}(E,\Z/\ell),
    \end{equation}
    where $\underline{H}_{\on{sing}}^j(F,\Z/\ell)$ is the local system of coefficients associated to the Serre fibration.
    We will make use of the construction of (\ref{top-sss}) due to Dress \cite{dress1967spectralsequenz}; see also \cite[pp. 225-229]{mccleary2001user} or \cite[Chapter 2, \S 5]{singer2006steenrod} (for $\ell=2$). Let $\on{Sing}_{\bullet}(E)$ be the singular simplicial complex with $\Z/\ell$ coefficients of $E$, regarded as a simplicial coalgebra via the Eilenberg-Zilber map. To the Serre fibration $F\to E\to B$, Dress associated a bisimplicial $(\Z/\ell)$-coalgebra $K_{\bullet\bullet}(f)$ and an augmentation map $\lambda:K_{\bullet\bullet}(f)\to \on{Sing}_{\bullet}(E)$ such that the induced map $\lambda^*:H^*(E,\Z/\ell)\to H^*(\on{Tot}(K_{\bullet\bullet}(f)))$ is an isomorphism. The first-quadrant spectral sequence for the double complex corresponding via Dold-Kan to the bicosimplicial $(\Z/\ell)$-algebra  $\on{Hom}(K_{\bullet\bullet}(f),\Z/\ell)$ may then be identified with (\ref{top-sss}). 
    
    We now give references for the construction of Steenrod operations on (\ref{top-sss}). These references work in the setting of the spectral sequence associated to a bisimplicial $(\Z/\ell)$-coalgebra, so by Dress' construction they apply to (\ref{top-sss}). Singer \cite{singer1973steenrod} constructed Steenrod operations on (\ref{top-sss}) when $\ell=2$. Singer's definition of $\prescript{}{B}{\on{Sq}^i}$ given in \cite{singer1973steenrod} has incorrect codomain. This has been fixed in \cite[Theorem 2.15]{singer2006steenrod}, where the correct $t$ appears. When $\ell$ is odd, Mori \cite{mori1979steenrod} constructed Steenrod operations on (\ref{top-sss}), but his definition of $\prescript{}{B}{\on{P}^s}$ and $\prescript{}{B}{\beta\on{P}^s}$ has a similar problem. These errors were noticed by Sawka \cite[End of p. 741]{sawka1982odd}. In the same article, Sawka \cite{sawka1982odd} defined the operations when $\ell$ is odd. Sawka's operations are defined as homomorphisms $E_r\to E_r$ but with indeterminacy, see \cite[pp. 739-740]{sawka1982odd}. By \cite[Proposition 2.5, Proposition 6.2]{sawka1982odd}, we may view Sawka's operations as homomorphisms $E_r\to E_w$, where $w=t,u$. 
    
    We may now give references for the analogues of (i), (ii) and (iii) for (\ref{top-sss}). For (i), see \cite[Proposition 1.2]{singer1973steenrod} when $\ell=2$ and \cite[Proposition 2.2]{sawka1982odd} when $\ell$ is odd. When $\ell=2$, the proof of (ii) may be found in \cite[Proposition 1.5]{singer1973steenrod} and \cite[Theorem 2.16]{singer2006steenrod}. When $\ell$ is odd, the proof of (ii) is given in \cite[Proposition 2.4]{sawka1982odd} and \cite[Theorem 1.4 and \S 5]{mori1979steenrod}.

    We now consider (iii). When $\ell=2$, (iii) is stated in \cite[(2.77)]{singer2006steenrod}. The key tool for its proof is \cite[I, Proposition 5.1]{singer1973steenrod} (or \cite[Theorem 2.23 (2.60)]{singer2006steenrod}). An analogue of (iii) is proved for a different spectral sequence in \cite[II, Proposition 5.2]{singer1973steenrod}, but the method is general and applies to the Eilenberg-Moore spectral sequence (see \cite[II, Proposition 7.2]{singer1973steenrod}, where the author leaves the proof to the reader) and to the Serre spectral sequence \cite[II, \S 8]{singer1973steenrod}. When $\ell$ is odd, the proof is identical once we replace \cite[I, Proposition 5.1]{singer1973steenrod} by \cite[Proposition 7.1, Case 1]{sawka1982odd}.
    
    Since the spectral sequence (\ref{ss-hom-quot}) is a special case of (\ref{top-sss}), it is endowed with Steenrod operations satisfying evident analogues of (i), (ii) and (iii). Moreover, the spectral sequence (\ref{ss-hom-quot}) is naturally isomorphic to (\ref{ss-hom-quot-bgamma}), hence the properties just established for  (\ref{ss-hom-quot-bgamma}) hold for (\ref{ss-hom-quot}) as well. By the naturality of Steenrod operations, this endows the spectral sequence (\ref{ss-pro-hom-quot}) with Steenrod operations that satisfy evident analogues of (i), (ii) and (iii).
    
    By \Cref{map-of-ss}, there is a natural isomorphism of spectral sequences from (\ref{ss-hochschild-serre}) to (\ref{ss-etale-hom-quot}). We use this isomorphism to define operations on (\ref{ss-hochschild-serre}) which satisfy all the properties required in the statement of \Cref{hochschild-serre-steenrod}. The construction of (\ref{ss-hochschild-serre}) and the operations on it is functorial with respect to finite field extensions $K\subset K'$ such that $K'/k$ is Galois. Passing to the direct limit, we obtain operations with the required properties on the spectral sequence (\ref{ss-hochschild-serre-limit}).
\end{proof}

\begin{rem}
Steenrod operations on the Serre spectral sequence (\ref{top-sss}) were first defined by Araki \cite[Definition 2 p. 87 and end of p. 89]{araki1957steenrod} and independently by V\'azquez \cite{vazquez1957note}. We have not been able to access V\'azquez's paper. Araki's definition is based on Serre's construction of (\ref{top-sss}) using the cubical singular complex; see \cite{serre1951homologie}. Araki then proved a number of properties of these operations and listed them in \cite[Summary p. 89]{araki1957steenrod}. In particular, \cite[(K) p. 90]{araki1957steenrod} yields (i). It is possible to prove (ii) and (iii) for (\ref{top-sss}) using Araki's definition, but there seems to be no reference in the literature. Multiple authors affirm without proof that the operations defined using Dress' and Serre's constructions of (\ref{top-sss}) agree; see e.g. \cite[p. 328]{singer1973steenrod}, \cite[p. 737]{sawka1982odd} and \cite[\S 5]{mori1979steenrod}. This is certainly true, but we could not locate a proof in the literature. This is why we do not rely on Araki's construction for the proof of \Cref{hochschild-serre-steenrod}.
\end{rem}

Suppose now that $k=\F$ is a finite field. By \cite[Theorem 7.7]{friedlander1982etale}
(which follows from \cite[Chapitre 7, Th\'eor\`eme 1.1] {SGA4.5}), the $G$-module $H^j(\ov{X}_{\bullet},\Z/\ell)$ is finite for all $j\geq 0$. Since the cohomological dimension of $\F$ is equal to $1$, the groups $H^i(\F,H^j(\ov{X}_{\bullet},\Z/\ell))$ are trivial for all $i\geq 2$. Therefore, the $E_2$-page of (\ref{ss-hochschild-serre-limit}) induces natural surjections
\[H^j(X_{\bullet},\Z/\ell)\to H^j(\ov{X}_{\bullet},\Z/\ell)^G\]
and natural isomorphisms
\[\on{Ker}(H^j(X_{\bullet},\Z/\ell)\to H^j(\ov{X}_{\bullet},\Z/\ell))\xrightarrow{\sim}H^1(\F, H^{j-1}(\ov{X}_{\bullet},\Z/\ell)),\]
which may be rewritten as natural short exact sequences
\begin{equation}\label{hochschild-serre-cycle-map}0\to H^1(\F, H^{j-1}(\ov{X}_{\bullet},\Z/\ell))\xrightarrow{\iota} H^j(X_{\bullet},\Z/\ell)\to H^j(\ov{X}_{\bullet},\Z/\ell)^G\to 0.\end{equation}

\begin{cor}\label{hochschild-serre-steenrod-finite-fields}
Let $\F$ be a finite field, $X_{\bullet}=(X_n)_{n\geq 0}$ be a simplicial scheme such that $X_n$ is of finite type over $\F$ for all $n\geq 0$, $s\geq 0$ be an integer, $\ell$ be a prime number invertible in $\F$, and $P$ be one of $\on{P}^s$, $\beta\on{P}^s$ ($\ell$ odd), $\on{Sq}^{2s}$, $\on{Sq}^{2s+1}$ ($\ell=2$). For every $j\neq 2s$, we have a commutative diagram of short exact sequences:
\[
\adjustbox{scale=0.95,center}{
\begin{tikzcd}
0\arrow[r] & H^1(\F, H^{j-1}(\ov{X}_{\bullet},\Z/\ell)) \arrow[r] \arrow[d, "\text{$H^1(\F,P)$}"] & H^j(X_{\bullet},\Z/\ell) \arrow[r] \arrow[d, "\textrm{$P$}"] & H^j(\ov{X}_{\bullet},\Z/\ell)^G \arrow[r] \arrow[d,"\text{$P$}"] & 0 \\
0 \arrow [r] & H^1(\F, H^{j+d-1}(\ov{X}_{\bullet},\Z/\ell)) \arrow[r] & 
H^{j+d}(X_{\bullet},\Z/\ell)
\arrow[r] & H^{j+d}(\ov{X}_{\bullet},\Z/\ell)^G \arrow [r] & 0,
\end{tikzcd}
}
\]
where the rows are (\ref{hochschild-serre-cycle-map}) and $d$ is the degree of $P$.
\end{cor}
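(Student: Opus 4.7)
The plan is to derive the corollary from Theorem~\ref{hochschild-serre-steenrod} using $\cd(\F)=1$. Since the $\ell$-cohomological dimension of $\F$ equals $1$, the $E_2$-page of the Hochschild--Serre spectral sequence (\ref{ss-hochschild-serre-limit}) vanishes for $a\geq 2$, all differentials vanish, $E_\infty=E_2$, and the filtration on $H^j(X,\Z/\ell)$ reduces to the two nonzero steps $F^1\subset F^0$, recovering the short exact sequence (\ref{hochschild-serre-cycle-map}).

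By Theorem~\ref{hochschild-serre-steenrod}(ii), $P$ preserves this filtration. On the subgroup $F^1H^j\cong H^1(\F,H^{j-1}(\ov X,\Z/\ell))$, whose spectral sequence indices are $a=1$, $b=j-1$, any backward operation $\prescript{}{B}P$ has target in column $\geq 2$: direct inspection of the formulas in Theorem~\ref{hochschild-serre-steenrod} shows the target column is $1+(2s-j+1)(\ell-1)+\varepsilon$ for $\on{P}^s$ and $\beta\on{P}^s$ (with $\varepsilon\in\{0,1\}$) and $i-j+2$ for $\on{Sq}^i$, and over $\F$ these are $\leq 1$ only in the boundary case where the backward operation coincides with the forward one. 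Hence $P$ on $F^1H^j$ is computed by the forward operation on $E_\infty^{1,j-1}$, which by Theorem~\ref{hochschild-serre-steenrod}(iii) is induced Galois-equivariantly by $P\colon H^{j-1}(\ov X,\Z/\ell)\to H^{j+d-1}(\ov X,\Z/\ell)$; this yields commutativity of the left square.

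For the right square, Theorem~\ref{hochschild-serre-steenrod}(iii) identifies the forward operation on the quotient $E_\infty^{0,j}=H^j(\ov X,\Z/\ell)^G$ with the Galois-invariant part of the Steenrod operation on $H^j(\ov X,\Z/\ell)$, so it remains to rule out a contribution from $\prescript{}{B}P$ landing in $E_\infty^{\geq 1,*}$. The backward target on $E_\infty^{0,j}$ has first coordinate equal to $(2s-j)(\ell-1)$, $(2s-j)(\ell-1)+1$, $2s-j$, or $2s+1-j$ in the four respective cases; for this to be $\leq 1$ over $\F$, together with the defined range of each backward operation, one finds via a short case analysis that the hypothesis $j\neq 2s$ forces us either into the coincidence case $j=2s+1$ for $\on{Sq}^{2s+1}$ (where backward agrees with forward), or into a degenerate range in which the instability axiom ($\on{P}^s(x)=0$ if $2s>|x|$, $\on{Sq}^i(x)=0$ if $i>|x|$) makes $P$ vanish identically on the relevant cohomology group, forcing $\prescript{}{B}P$ to vanish by Theorem~\ref{hochschild-serre-steenrod}(ii). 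Thus only the forward contribution survives, establishing the right square. The main delicate point is this bookkeeping around the critical degree $j=2s$, which the hypothesis is designed to exclude; every other aspect follows formally from Theorem~\ref{hochschild-serre-steenrod}.
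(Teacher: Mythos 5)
Your proposal is correct and follows essentially the same route as the paper's proof: both deduce the corollary from \Cref{hochschild-serre-steenrod}(ii) and (iii) together with $\cd(\F)=1$, which collapses the filtration to two steps and identifies the induced maps on $E_\infty^{1,j-1}$ and $E_\infty^{0,j}$ with $H^1(\F,P)$ and $P$ respectively. The only difference is organizational: the paper disposes of the range $2s>j$ at the outset by instability (so that in the remaining range $2s<j$ every relevant operation is a forward one and no bookkeeping with $\prescript{}{B}{P}$ is needed), whereas you carry the backward operations through a case analysis that reaches the same conclusion.
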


\begin{proof}
Recall that $\on{Sq}^{2s+1}=\on{Sq}^1\on{Sq}^{2s}$ when $\ell=2$. If $2s>j$, then $P$ is zero on $H^j(X_{\bullet},\Z/\ell)$ and there is nothing to prove. Suppose now that $2s<j$. By \Cref{hochschild-serre-steenrod}(ii), the map $P:H^j(X_{\bullet},\Z/\ell)\to H^{j+d}(X_{\bullet},\Z/\ell)$ respects the filtration coming from (\ref{ss-hochschild-serre-limit}), hence $P$ induces a commutative diagram as in \Cref{hochschild-serre-steenrod-finite-fields}, except that the left and right vertical maps have not been identified yet. The naturality of Steenrod operations implies that the vertical map on the right is $P$. It remains to show that the vertical map on the left is $H^1(\F,P)$. This follows from \Cref{hochschild-serre-steenrod}(iii) for $(a,b)=(1,j-1)$.
\end{proof}

In the rest of this paper, we only will apply (\ref{hochschild-serre-cycle-map}) and \Cref{hochschild-serre-steenrod-finite-fields} when $X_{\bullet}$ is the constant simplicial scheme associated to a smooth projective $\F$-variety $X$.

\subsection{Odd-degree operations and non-algebraic classes}

The material of this subsection is well known; see \cite[\S 2]{pirutka2015note}. It differs from \cite{colliot2010autour} because the ground field is finite instead of algebraically closed. The arguments of \cite{pirutka2015note} use motivic cohomology and also require the existence of a primitive $\ell^2$-root of unity. In order to make the current paper more self-contained and to avoid introducing motivic Steenrod operations, we provide alternative arguments which do not use motivic cohomology. This also saves us the trouble of proving the compatibility betweeen the \'etale Steenrod operations defined in \ref{define-steenrod} and the motivic Steenrod operations, or alternatively the \'etale Steenrod operations defined using homological algebra as in \cite{brosnan2015comparison} or \cite{guillou2019operations}.

\begin{lem}\label{projective-finite-field}
Let $\ell$ be a prime number invertible in $\F$, and let $\F$ be a finite field containing a primitive $\ell^2$-th root of unity. Then for all $n\geq 0$ the odd-degree Steenrod operations on $H^*(\P^n_{\F},\Z/\ell)$ vanish.
\end{lem}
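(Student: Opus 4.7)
The plan is to show that the Bockstein vanishes identically on $H^*(\P^n_\F,\Z/\ell)$, which will force every odd-degree Steenrod operation to vanish for formal reasons.

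First, I would identify the cohomology ring explicitly. Since $\F$ contains a primitive $\ell$-th root of unity, the identification $\mu_\ell\simeq\Z/\ell$ combined with the projective bundle formula yields
\[
H^*(\P^n_\F,\Z/\ell)\;\simeq\;H^*(\F,\Z/\ell)[h]/(h^{n+1}),\qquad |h|=2,
\]
where $h$ is the mod $\ell$ hyperplane class. As $\F$ has cohomological dimension one, one has $H^*(\F,\Z/\ell)=\Z/\ell\oplus\Z/\ell\cdot\chi$ with $|\chi|=1$ and $\chi^2=0$ (the latter since $H^2(\F,\Z/\ell)=0$). Thus
\[
H^*(\P^n_\F,\Z/\ell)\;\simeq\;(\Z/\ell)[h,\chi]/(h^{n+1},\chi^2).
\]

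Next, I would verify that both algebra generators lift to $\Z/\ell^2$-coefficients. The stronger hypothesis that $\F$ contains a primitive $\ell^2$-th root of unity gives $\mu_{\ell^2}\simeq\Z/\ell^2$ as a $G$-module, and the mod $\ell^2$ hyperplane class supplies a lift $h_2\in H^2(\P^n_\F,\Z/\ell^2)$ of $h$. Similarly, cohomological dimension one makes $H^1(\F,\Z/\ell^2)\twoheadrightarrow H^1(\F,\Z/\ell)$ surjective, producing a lift of $\chi$. Hence $\beta h=\beta\chi=0$. Because $\beta$ is a graded derivation on mod $\ell$ cohomology and kills a set of algebra generators of $H^*(\P^n_\F,\Z/\ell)$, it vanishes on the entire ring.

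Finally, every odd-degree element of the \'etale Steenrod algebra is a sum of admissible monomials, each of which (for parity reasons on the total degree) must contain at least one Bockstein factor. Evaluating such a composition on any class $\alpha\in H^*(\P^n_\F,\Z/\ell)$, at some intermediate stage the Bockstein is applied to an element of the ring and yields $0$, after which all subsequent Steenrod operations continue to produce $0$. Concretely, for $\ell=2$ the Adem relation gives $\on{Sq}^{2s+1}=\on{Sq}^1\on{Sq}^{2s}=\beta\,\on{Sq}^{2s}$, and for $\ell$ odd the basic odd-degree operations $\beta\on{P}^s$ vanish for the same reason. No substantive obstacle is anticipated: the proof reduces to the projective bundle formula, two lifts to $\Z/\ell^2$, and the derivation property of $\beta$.
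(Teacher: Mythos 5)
Your proof is correct and follows essentially the same route as the paper: identify the ring $H^*(\P^n_\F,\Z/\ell)\simeq(\Z/\ell)[h,\epsilon]/(h^{n+1},\epsilon^2)$, kill the Bockstein on the generators by lifting $h$ to $\Z/\ell^2$-coefficients (using the primitive $\ell^2$-th root of unity) and using $H^2(\F,\Z/\ell)=0$ for the degree-one class, and then conclude formally that every odd-degree operation factors through $\beta$. The only cosmetic difference is at $\ell=2$, where the paper finishes by computing the total square $\on{Sq}(h)=h+h^2$, $\on{Sq}(\epsilon)=\epsilon$ directly instead of invoking $\on{Sq}^{2s+1}=\on{Sq}^1\on{Sq}^{2s}$; both arguments are valid.
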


\begin{proof}
    Viewing $\P^n_\F$ as a projective bundle over $\on{Spec}(\F)$, the projective bundle formula \cite[Expos\'e VII, Th\'eor\`eme 2.2.1]{sga5} yields a $G$-equivariant isomorphism of graded rings:
    \[\oplus_{i=0}^{n} H^{2i}(\P^n_{\ov{\F}},\mu_\ell^{\otimes i})\simeq (\Z/\ell)[h]/(h^{n+1}),\qquad |h|=2,\]
    where on the multiplication on the left side is given by cup product.
    
    Since $\F$ contains a primitive $\ell$-th root of unity, $\mu_{\ell}^{\otimes i}\simeq \Z/\ell$ for all $i$. We have  \[H^*(\F,\Z/\ell)\simeq (\Z/\ell)[\epsilon]/(\epsilon^2),\qquad |\epsilon|=1.\]
    We also denote by $\epsilon\in H^1(\P^1_{\F},\Z/\ell)$ the pullback of $\epsilon$ along the structure morphism $\P^1_{\F}\to \on{Spec}(\F)$.
    
    Consider the Hochschild-Serre spectral sequence
    \[E_2^{i,j}\coloneqq H^i(\F,H^j(\P^n_{\ov{\F}},\Z/\ell))\Rightarrow H^{i+j}(\P^n_{\F},\Z/\ell).\]
    Note that $h$ maps to the generator $\ov{h}\in E_{\infty}^{0,2}=E_2^{0,2}$ and that $\epsilon$ is the image of the generator of $\ov{\epsilon}\in E_{\infty}^{1,0}=E_2^{1,0}$. Since the Hochschild-Serre spectral sequence is multiplicative and degenerates at the $E_2$-page, we have a ring isomorphism \[E_{\infty}\simeq E_2\simeq (\Z/\ell)[\overline{h},\ov{\epsilon}]/(\ov{h}^{n+1},\ov{\epsilon}^2).\] It follows that $h^{n}\epsilon$ is not zero in the associated graded of $H^*(\P^n_{\F},\Z/\ell)$, hence $h^{n}\epsilon\neq 0$ in $H^*(\P^n_{\F},\Z/\ell)$. We obtain a graded $\Z/\ell$-algebra isomorphism
    \begin{equation}\label{proj-bundle}H^*(\P^n_{\F},\Z/\ell)\simeq (\Z/\ell)[\epsilon,h]/(\epsilon^2, h^{n+1}).\end{equation}

    We now show that the Bockstein homomorphism \[\beta\colon H^*(\P^n_{\F},\Z/\ell)\to H^*(\P^n_{\F},\Z/\ell)\] is the zero homomorphism. 
    
    To show that $\beta=0$, it suffices to show that $\beta(\epsilon)=0$ and $\beta(h)=0$. Since $\epsilon$ comes from $\on{Spec}(\F)$ and $H^2(\F,\Z/\ell)=0$, we have $\beta(\epsilon)=0$. We have a commutative triangle
    \[
    \begin{tikzcd}
        CH^1(\P^n_{\F}) \arrow[r]\arrow[dr] & H^2(\P^n_{\F},\mu_{\ell^2}) \arrow[d]  \\
        & H^2(\P^n_{\F},\mu_\ell),
    \end{tikzcd}
    \]
    where the horizontal and oblique arrows are cycle maps, and where the vertical arrow is the reduction map. Since $\F$ contains a primitive $\ell^2$-th root of unity, we may rewrite this triangle as
    \[
    \begin{tikzcd}
        CH^1(\P^n_{\F}) \arrow[r]\arrow[dr] & H^2(\P^n_{\F},\Z/\ell^2) \arrow[d]  \\
        & H^2(\P^n_{\F},\Z/\ell).
    \end{tikzcd}
    \]
    It follows that $h$ lifts to $\Z/\ell^2$, and hence $\beta(h)=0$. Thus $\beta$ vanishes on $H^*(\P^n_{\F},\Z/\ell)$, as desired. Every odd-degree Steenrod operation is of the form $\beta\on{P}^i$ (for $\ell$ odd) or $\on{Sq}^{2i+1}=\beta\on{Sq}^{2i}$ (for $\ell=2$), and hence must also vanish on $H^*(\P^n_{\F},\Z/\ell)$.
\end{proof}

\begin{thm}[\Cref{odd-vanish-intro}]\label{odd-vanish}
    Let $i\geq 1$ be an integer, $\ell\geq i$ be a prime number, $\F$ be a finite field containing a primitive $\ell^2$-th root of unity $\zeta$ and $X$ be a smooth projective $\F$-variety. Let $\alpha\in H^{2i}(X,\Z/\ell)$ be an algebraic class, that is, a class in the image of
    \[CH^i(X)/\ell\xrightarrow{\cl} H^{2i}(X,\mu_{\ell}^{\otimes i})\xrightarrow{\sim}H^{2i}(X,\Z/\ell),\]
    where the isomorphism on the right is induced by the choice of the root $\zeta^{\ell}$. Then all odd-degree Steenrod operations on $H^*(X,\Z/\ell)$ vanish on $\alpha$.
 \end{thm}
  
 \begin{proof}
    We adapt an argument given by Totaro over algebraically closed fields; see the proof of \cite[Th\'eor\`eme 2.1(1)]{colliot2010autour}. There are however some difficulties to overcome, as the $\Z/\ell$-\'etale cohomology of Grassmannians over $\F$ is not concentrated in even degrees. This is where \Cref{hochschild-serre-steenrod-finite-fields} and \Cref{projective-finite-field} come into play.
    
    Since $\ell$ is coprime with $(i-1)!$, by Jouanolou's Riemann-Roch Theorem without denominators \cite{jouanolou1970riemann}, every element of $CH^i(X)/\ell$  is a linear combination of Chern classes of vector bundles on $X$. Therefore, we may assume that $\alpha=c_i(E)$ is the $i$-th Chern class of a vector bundle $E\to X$. 
    
    If $\pi\colon \P(E)\to X$ is the projective bundle associated to $E\to X$, then $\pi^*E$ admits a rank $1$ quotient bundle and $\pi^*\colon H^*(X,\Z/\ell)\to H^*(\P(E),\Z/\ell)$ is injective. Moreover, since pullback along $\pi$ is defined on the Chow ring and is compatible with the pullback on \'etale cohomology, $\pi^*\alpha$ is still algebraic. Iterating this procedure, we may suppose that $E$ is an iterated extension of line bundles $L_j$, $j=1,\dots,r$. Then $c_i(E)$ is the degree $i$ symmetric polynomial in the $c_1(L_j)$. By the Cartan formula, it suffices to show that odd degree Steenrod operations vanish on the $c_1(L_j)$. We may thus assume that $E$ is a line bundle and that $i=1$.
    
    Let $L$ be a very ample line bundle on $X$ such that $E\otimes L$ is generated by global sections. Then 
    \[c_1(E)=c_1(E\otimes L)-c_1(L).\]
    By the linearity of the Steenrod operations, to prove that odd operations vanish on $c_i(E)$ it suffices to show that they vanish on $c_1(E\otimes L)$ and $c_1(L)$. We may thus assume that $E$ is generated by global sections, in which case $c_1(E)$ is the pullback of a cohomology class on $\P^n_{\F}$ for some integer $n\geq 1$. This reduces us to the case when $X=\P^n_{\F}$. The conclusion follows from \Cref{projective-finite-field}. 
\end{proof}

\subsection{Non-algebraic geometrically trivial classes}

Let $\sigma\in G$ be the Frobenius automorphism of $\ov{\F}$: it is a topological generator of $G$. If $M$ is a finite $G$-module, there is an isomorphism $H^1(\F,M)\simeq M/(\sigma-1)M$, sending the class of a cocycle $\{m_g\}_{g\in G}$ to the class of $m_{\sigma}$ in $M/(\sigma-1)M$. We obtain a surjection 
\begin{equation}\label{h1-surj}M\to H^1(\F,M),\end{equation} 
which is an isomorphism if and only if $G$ acts trivially on $M$; see \cite[XIII \S 1, Proposition 1]{serre1979local}.

The homomorphism (\ref{h1-surj}) is functorial in the following way. Suppose that $f\colon M\to N$ is a homomorphism of continuous $G$-modules, and let \[H^1(\F,f)\colon H^1(\F,M)\to H^1(\F,N)\] be the induced homomorphism. By definition, $H^1(\F, f)$ sends the class of a cocycle $\{m_g\}_{g\in G}$ to the class of $\{f(m_g)\}_{g\in G}$. It follows that we have a commutative diagram
\[
\begin{tikzcd}
M \arrow[r,"(\ref{h1-surj})"] \arrow[d, "f"] & H^1(\F ,M) \arrow[d, "\text{$H^1(\F, f)$}"] \arrow[r,"\sim"] & M/(\sigma-1)M \arrow[d, "f"]   \\
N \arrow[r,"(\ref{h1-surj})"] &  H^1(\F,N)  \arrow[r,"\sim"]& N/(\sigma-1)M,
\end{tikzcd}
\]
where the composition of the horizontal map is given by reduction modulo $\sigma-1$.

Before stating \Cref{h1-trick}, we recall a construction of the $\ell$-adic Hochschild-Serre spectral sequence over finite fields. Of course, this is a special case of \cite{jannsen1988continuous}. For every integer $m\geq 1$, the Hochschild-Serre spectral sequence
\[ E_2^{r,s}\coloneqq H^r(\F, H^s(\ov{X},\mu_{\ell^m}^{\otimes i}))\Rightarrow H^{r+s}(X,\mu_{\ell^m}^{\otimes i})\]
yields compatible short exact sequences
\begin{equation}\label{hochschild-serre-cycle-map-m}0\to H^1(\F, H^{2i-1}(\ov{X},\mu_{\ell^m}^{\otimes i}))\xrightarrow{\iota} H^{2i}(X,\mu_{\ell^m}^{\otimes i})\to H^{2i}(\ov{X},\mu_{\ell^m}^{\otimes i})^G\to 0.\end{equation}
Since the abelian groups $H^{2i-1}(\ov{X},\mu_{\ell^m}^{\otimes i})$ are finite, we may pass to the inverse limit in (\ref{hochschild-serre-cycle-map-m}) and obtain a short exact sequence
\begin{equation}\label{hochschild-serre-cycle-map-ell-adic} 0\to H^1(\F, H^{2i-1}(\ov{X},\Z_{\ell}(i)))\xrightarrow{\iota} H^{2i}(X,\Z_{\ell}(i))\to H^{2i}(\ov{X},\Z_{\ell}(i))^G\to 0.\end{equation}

We will produce our counterexamples by means of the following consequence of \Cref{odd-vanish}. 

\begin{prop}\label{h1-trick}
 Let $\F$ be a finite field, $\ell$ be a prime number invertible in $\F$, $1\leq i\leq \ell$ be an integer, and $X$ be a smooth projective $\F$-variety. Suppose that $\F$ contains a primitive $\ell^2$-root of unity. Let:
 
\begin{itemize}
    \item[--] $u\in H^{2i-1}(\ov{X},\Z_{\ell}(i))$,
    \item[--] $P$ be a Steenrod operation of odd degree $d$ which does not vanish on $\pi_{\ell}(u)$,
    \item[--] $u'\in H^1(\F,H^{2i-1}(\ov{X},\Z_\ell(i)))$ be the image of $u$ under the map (\ref{h1-surj}) for $M=H^{2i-1}(\ov{X},\Z_\ell(i))$,
    \item[--] $\alpha\in H^{2i}(X,\Z_\ell(i))$ be the image of $u'$ under the inclusion $\iota$ appearing in (\ref{hochschild-serre-cycle-map-ell-adic}). 
\end{itemize} 

 Then:
 
 \begin{enumerate}
     \item[$(i)$] $\alpha_{\ov{\F}}=0$, and
     \item[$(ii)$] if $G$ acts trivially on $H^{2i-1+d}(\ov{X},\Z/\ell)$, then $\alpha$ is not algebraic.
 \end{enumerate}
\end{prop}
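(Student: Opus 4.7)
For (i), the plan is to read the statement directly off the Hochschild--Serre short exact sequence (\ref{hochschild-serre-cycle-map-ell-adic}): the map $\iota$ identifies $H^1(\F,H^{2i-1}(\ov{X},\Z_\ell(i)))$ with the kernel of the restriction $H^{2i}(X,\Z_\ell(i))\to H^{2i}(\ov{X},\Z_\ell(i))^G$, so $\alpha=\iota(u')$ restricts to zero over $\ov{\F}$.

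For (ii), I would argue by contradiction via Proposition \ref{odd-vanish}: since $\ell\geq i$, algebraicity of $\alpha$ would imply algebraicity of $\pi_\ell(\alpha)$ modulo $\ell$, which would force $P(\pi_\ell(\alpha))=0$. So the task reduces to showing $P(\pi_\ell(\alpha))\neq 0$. First, mod $\ell$ reduction is compatible with Hochschild--Serre, hence $\pi_\ell(\alpha)$ is the image of $\pi_\ell(u')\in H^1(\F,H^{2i-1}(\ov{X},\Z/\ell))$ under the mod $\ell$ inclusion $\iota$ of (\ref{hochschild-serre-cycle-map}).

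Next I would invoke Corollary \ref{hochschild-serre-steenrod-finite-fields} with $j=2i-1$: because $j$ is odd while the index $2s$ appearing in the corollary is even, the hypothesis $j\neq 2s$ is automatic for both relevant forms of $P$ (namely $\beta\on{P}^s$ when $\ell$ is odd and $\on{Sq}^{2s+1}$ when $\ell=2$). The corollary then yields
\[P(\pi_\ell(\alpha))=\iota\bigl(H^1(\F,P)(\pi_\ell(u'))\bigr).\]
By the naturality square for (\ref{h1-surj}) discussed just before the proposition, $H^1(\F,P)(\pi_\ell(u'))$ is in turn the image of $P(\pi_\ell(u))$ under (\ref{h1-surj}) for $M=H^{2i-1+d}(\ov{X},\Z/\ell)$. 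The hypothesis that $G$ acts trivially on $M$ makes this instance of (\ref{h1-surj}) an isomorphism, so the class is nonzero because $P(\pi_\ell(u))\neq 0$ by assumption; combined with injectivity of $\iota$ this gives $P(\pi_\ell(\alpha))\neq 0$, as needed.

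The substantive input is already absorbed into Corollary \ref{hochschild-serre-steenrod-finite-fields}, so the main obstacle is purely bookkeeping: tracking that $\pi_\ell$, (\ref{h1-surj}), $\iota$, and $P$ are mutually compatible in the way claimed above. I do not anticipate a genuine difficulty beyond keeping the indices and maps straight.
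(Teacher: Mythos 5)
Your proposal is correct and follows essentially the same route as the paper's proof: part (i) from exactness of the Hochschild--Serre sequence, and part (ii) by combining Corollary \ref{hochschild-serre-steenrod-finite-fields}, the naturality of (\ref{h1-surj}) together with the trivial $G$-action hypothesis, and Proposition \ref{odd-vanish}. Your bookkeeping (noting $j=2i-1$ is odd so the $j\neq 2s$ hypothesis is automatic, and using injectivity of $\iota$) is slightly more explicit than the paper's but introduces nothing new.
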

 
 \begin{proof}
  (i) follows from the exactness of (\ref{hochschild-serre-cycle-map-ell-adic}).
  
  If $G$ acts trivially on $H^{2i-1+d}(\ov{X},\Z/\ell)$, then (\ref{h1-surj}) is an isomorphism for $M=H^{2i-1+d}(\ov{X},\Z/\ell)$. Since $P(\pi_{\ell}(u))\neq 0$ and $\pi_{\ell}(u')$ is the image of $\pi_{\ell}(u)$ under (\ref{h1-surj}) for $M=H^{2i-1}(\ov{X},\Z/\ell)$, we deduce that $H^1(\F, P)(u')\neq 0$.  Now \Cref{hochschild-serre-steenrod-finite-fields} implies that $P(\pi_\ell(\alpha))\neq 0$. Since the degree $d$ of $P$ is odd,  \Cref{odd-vanish} implies that $\pi_{\ell}(\alpha)$ is not algebraic, hence $\alpha$ is not algebraic. This proves (ii).
 \end{proof}

 \section{Proof of Theorems \ref{mainthm-tate} and \ref{mainthm}}\label{sec4}

\subsection{Proof of Theorem \ref{mainthm}}

\begin{proof}[Proof of \Cref{mainthm}]
Let $p$ be an odd prime number, $K$ be a $p$-adic field with ring of integers $R$ and residue field $\F$ is finite of characteristic $p$, and fix a field embedding $K\hookrightarrow \C$. Let $E_1$ and $E_2$ be elliptic curves over $\on{Spec}(R)$, such that $E_1$ admits a torsion point $P$ of order $4$, and that $\on{Aut}_{R}(E_2,0)$ admits an element $\sigma$ of order $4$. One can easily construct such $E_1$ and $E_2$ when $R$ is a sufficiently large finite extension of $\Z_p$. (To construct $E_1$, we start from any elliptic curve $E'/\Z_p[i]$. Any $4$-torsion point of $E'(\overline{\Q}_p)$ is defined over a finite extension $K/\Q_p$. We let $R$ be the integral closure of $\Z_p$ in $K$, we define $E_1\coloneqq (E')_R$, and we let $P$ be the closure of the $4$-torsion point. For $E_2$, consider the elliptic curve $E''/\Z_p[i]$ given by the Weierstrass equation $y^2=x^3+x$. Since $R$ contains $\Z_p[i]$, we may define $E_2\coloneqq (E'')_R$.) 

Consider the free $\Z/4$-action on  $E_1\times_R E_2$ over $R$, where the generator $1\in \Z/4$ acts on $E_1$ by translation by $P$ and on $E_2$ as $\sigma$, and define $S\coloneqq (E_1\times_{R}E_2)/(\Z/4)$. The morphism $E_1\times_{R} E_2$ given by $(u,v)\mapsto (u, v+\sigma(v))$ induces a morphism $f\colon S\to S$ which is finite \'etale of degree $2$. The morphism $f$ corresponds to the involution $\tilde{\sigma}\colon S\to S$ induced by $(u,v)\mapsto (u,\sigma(v))$. We let $\alpha\in H^1(S,\Z/2)$ be the corresponding cohomology class. 

By \cite[Lemma 5.1]{benoist2021coniveau} and Artin's comparison theorem, there exists a cohomology class $\beta'\in H^1(S_{\mathbb{C}},\Z/2)$ such that $\alpha_{\mathbb{C}}^3\beta'\neq 0$ and $(\beta')^2=0$. By the invariance of \'etale cohomology under extensions of algebraically closed fields \cite[Corollary VI.2.6]{milne1980etale}, $\beta'$ is defined over $\ov{K}$, hence over a suitable finite extension of $K'$ of $K$. Replacing $R$ by its integral closure in $K'$ and localizing at one of the maximal ideals, we may thus assume that $\beta'$ is defined over $K$. 

Since $p\neq 2$, we have an isomorphism of $R$-group schemes $\mu_{2,R}\simeq (\Z/2)_{R}$. It follows from the Kummer sequence \[1\to \mu_{2}\to \mathbb{G}_{\on{m}}\to \mathbb{G}_{\on{m}}\to 1\] over $S_R$ and $S_K$ that $H^1(S,\Z/2)=\on{Pic}(S)[2]$ and $H^1(S_{K},\Z/2)=\on{Pic}(S_{K})[2]$. Since the special fiber of $S\to\on{Spec}(R)$ is a principal divisor, the restriction map $\on{Pic}(S)\to \on{Pic}(S_K)$ is an isomorphism, hence
\begin{equation}\label{restr-surj}
    \text{The restriction map $H^1(S,\Z/2)\to H^1(S_K,\Z/2)$ is surjective.}
\end{equation}
Therefore there exists $\beta \in H^1(S,\Z/2)$ such that $\beta_{\mathbb{C}}=\beta'$. 

Let $E$ be another elliptic curve over $\on{Spec}(R)$. (For example, one may take $E=E_1$ or $E=E_2$.) Define $Y\coloneqq (S\times_RE)/(\Z/2)$, where $\Z/2$ acts on $S$ as $\tilde{\sigma}$ and on $E$ as $-\on{Id}$. Let $\pi\colon Y\to S$ be the morphism induced by the first projection, and let $Y'\to Y$ be the double cover corresponding to $\pi^*(\beta)$. Define $Z\coloneqq (Y'\times_R E)/(\Z/2)$, where $\Z/2$ acts via the involution corresponding to the double cover $Y'\to Y$ on the left and via $-\on{Id}$ on the right.  It is proved in \cite[Proof of Proposition 5.3]{benoist2021coniveau} that there exists $\sigma'\in H^3_{\on{sing}}(Z_{\mathbb{C}},\Z)[2]$ such that $\pi_2(\sigma')^2\neq 0$. By Artin's comparison theorem, we deduce the existence of $\sigma\in H^3(Z_{\mathbb{C}},\Z_2)[2]$ such that $\pi_2(\sigma)^2\neq 0$. By the invariance of \'etale cohomology under extensions of algebraically closed fields, $\sigma$ is defined over $\ov{K}$. Therefore, using (\ref{restr-surj}) and enlarging $R$ if necessary, we may suppose that $\sigma$ is defined over $R$. Letting $X$ be the special fiber of $Z$, by \Cref{smooth-proper} the restriction of $\sigma$ to $X$ satisfies the conditions of \Cref{h1-trick}, and the conclusion follows.
\end{proof}

\begin{rem}
    We give a shorter proof of the following weakening of \Cref{mainthm}: For all but finitely many primes $p$, there exist a finite field $\F$ of characteristic $p$ and a smooth projective fourfold $X$ over $\F$ such that the conclusion of \Cref{mainthm} holds for $X$.
    
    Indeed, by \cite[Proposition 5.3]{benoist2021coniveau} and Artin's comparison theorem, there exist a smooth projective fourfold $Z$ over $\mathbb{C}$ and a $2$-torsion class $\sigma\in H^3(Z,\Z_2)$ such that $\pi_2(\sigma)^2\neq 0$. By a spreading out argument, there exist a finitely generated field extension $K/\Q$, a smooth integral $\Z$-scheme of finite type $S$ with fraction field $K$, and a smooth projective scheme $\mc{Z}\to S$ such that $\mc{Z}_{\C}\simeq Z$. The morphism $S\to\on{Spec}(\Z)$ is flat, hence open. It follows that all but finitely many primes appear as residue characteristics of closed points of $S$. 
    
    Let $p\neq \ell$ be one such prime, $s\in S$ be a closed point whose residue field $k(s)$ is finite of characteristic $p$, and set $\F\coloneqq k(s)$ and  $X\coloneqq (\mc{Z})_{k(s)}$. By \cite[Proposition 7.1.9]{ega2}, there exist a discrete valuation ring $R$, with generic point $\eta$ and closed point $t$, and a morphism $\on{Spec}(R)\to S$ which sends $\eta$ to the generic point of $S$, $t$ to $s$, and such that the inclusion $K\hookrightarrow k(\eta)$ is an equality. By the invariance of \'etale cohomology under field extensions and the smooth and proper base change theorems, we have an isomorphism
    \begin{equation}\label{compat-isom}H^*(Z,\Z_2)\xrightarrow{\sim }H^*(\mc{Z}_{\ov{K}},\Z_{2})\xrightarrow{\sim}H^*(X_{\ov{k(t)}},\Z_{2})\xrightarrow{\sim} H^*(\ov{X},\Z_2),\end{equation}
    where $\ov{X}=X_{\ov{\F}}$, and a compatible isomorphism with $\Z/2$ coefficients.
    Let $\alpha\in H^3(\ov{X},\Z_{2})$ be the image of $\sigma$ under (\ref{compat-isom}).
    By \Cref{smooth-proper}, we have $\pi_2(\alpha)^2\neq 0$. Replacing $\F$ by a finite extension if necessary, we may suppose that $G$ acts trivially on $H^3(\ov{X},\Z/2)$. The conclusion now follows from \Cref{h1-trick}.
\end{rem}

We now show that \Cref{h1-trick} cannot be used to find a $3$-dimensional example $X$ with $H^3_{\nr}(X,\Q_\ell/\Z_\ell(2))\neq 0$.

\begin{prop}\label{no-dim3}
Let $\F$ be a finite field, $\ell$ be a prime number invertible in $\F$, $X$ be a smooth projective threefold over $\F$, $\alpha\in H^4(X,\Z_{\ell}(2))$ be a class such that $\alpha_{\ov{\F}}=0$, and let $\ov{\alpha}$ the reduction of $\alpha$ modulo $\ell$. Suppose that $\F$ contains a primitive $\ell^2$-th root of unity. Then all odd-degree \'etale Steenrod operations on $X$ vanish on $\ov{\alpha}$.
\end{prop}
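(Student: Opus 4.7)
The plan is to combine the Hochschild--Serre description of $\alpha$ with \Cref{hochschild-serre-steenrod-finite-fields}, thereby reducing the statement to the vanishing of odd-degree Steenrod operations on a class in $H^{3}(\ov{X},\Z/\ell)$ that lifts to $\Z_{\ell}$-coefficients. Since $\alpha_{\ov{\F}}=0$, the $\ell$-adic Hochschild--Serre sequence (\ref{hochschild-serre-cycle-map-ell-adic}) yields a unique $u'\in H^{1}(\F,H^{3}(\ov{X},\Z_{\ell}(2)))$ with $\alpha=\iota(u')$. Because $H^{3}(\ov{X},\Z_{\ell}(2))$ is a finitely generated $\Z_{\ell}$-module, the $\ell$-adic version of the surjection (\ref{h1-surj}) provides a lift $u\in H^{3}(\ov{X},\Z_{\ell}(2))$ with $u'=[u]$, and compatibility with reduction mod $\ell$ gives $\pi_{\ell}(\alpha)=\iota([\pi_{\ell}(u)])$ inside the analogous mod-$\ell$ sequence (\ref{hochschild-serre-cycle-map}).

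For any odd-degree Steenrod operation $P$ of degree $d$, I would then apply \Cref{hochschild-serre-steenrod-finite-fields} with $j=4$ to obtain
\[
P(\pi_{\ell}(\alpha)) \,=\, \iota\bigl(\,[\,P(\pi_{\ell}(u))\,]\,\bigr)\ \in\ H^{4+d}(X,\Z/\ell),
\]
thereby reducing the problem to the vanishing of $P(\pi_{\ell}(u))$ in $H^{3+d}(\ov{X},\Z/\ell)$. The hypothesis $j\neq 2s$ of \Cref{hochschild-serre-steenrod-finite-fields} excludes only $\on{Sq}^{5}$ (for $\ell=2$, $s=2$) and $\beta\on{P}^{2}$ (for $\ell$ odd, $s=2$); both have degree $\geq 5$ and land in $H^{\geq 9}(X,\Z/\ell)=0$, since a smooth projective threefold over a finite field has $\ell$-cohomological dimension at most $7$, so these operations may be disposed of directly.

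The remaining odd-degree operations split into three groups. First, any $P$ of degree $d\geq 4$ lands in $H^{\geq 7}(\ov{X},\Z/\ell)=0$ since $\dim \ov{X}=3$. Second, for the Bockstein ($P=\beta$ when $\ell$ is odd, $P=\on{Sq}^{1}=\beta$ when $\ell=2$), the class $\pi_{\ell}(u)$ admits a lift to $H^{3}(\ov{X},\Z/\ell^{2})$, namely the reduction of $u$ modulo $\ell^{2}$, so $\beta(\pi_{\ell}(u))=0$. Third, the only remaining case is $\on{Sq}^{3}$ for $\ell=2$: the unstability axiom gives $\on{Sq}^{3}(\pi_{\ell}(u))=\pi_{\ell}(u)^{2}=\pi_{\ell}(u\cup u)$, and graded commutativity of the cup product in $\ell$-adic cohomology yields $u\cup u=-u\cup u$ in $H^{6}(\ov{X},\Z_{2}(4))$, so $u\cup u$ is $2$-torsion. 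Since $H^{6}(\ov{X},\Z_{2}(4))$ is a free $\Z_{2}$-module of finite rank (being the top-degree cohomology of a smooth projective threefold), it is torsion-free, forcing $u\cup u=0$ and hence $\pi_{\ell}(u)^{2}=0$.

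The principal technical input is \Cref{hochschild-serre-steenrod-finite-fields}; once that is granted, the plan reduces to bookkeeping which odd-degree operations have nonzero target in the relevant range, together with two standard observations (the Bockstein vanishes on $\Z/\ell^{2}$-lifts; and graded commutativity forces $x^{2}=0$ for $|x|$ odd when the ambient integral cohomology is torsion-free). The only step genuinely requiring care is the $\on{Sq}^{3}$ case, where the crucial point is that $\pi_{\ell}(u)$ must be lifted to the integral class $u\cup u$ so that graded commutativity can be applied in a torsion-free group.
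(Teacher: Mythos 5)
Your proof is correct, but it reaches the key vanishing by a genuinely different route than the paper. The paper's own argument runs through profinite \'etale cobordism: it invokes the Atiyah--Hirzebruch spectral sequence of Quick for $\hat{MU}^*(\ov{X},\Z_\ell)$, observes that the only obstruction to the surjectivity of $\Z_{\ell}\otimes\hat{MU}^3(\ov{X})\to H^3(\ov{X},\Z_{\ell})$ is the torsion-valued differential $d_3$ into the torsion-free group $H^{6}(\ov{X},\Z_{\ell})$, and then cites Quick's result that odd-degree Steenrod operations annihilate (reductions of) classes in the image of the cobordism map. You replace this entire black box by an elementary case analysis on $\pi_{\ell}(u)\in H^{3}(\ov{X},\Z/\ell)$: degree bounds kill everything of degree $\geq 4$ (and also the operations excluded by the hypothesis $j\neq 2s$ of \Cref{hochschild-serre-steenrod-finite-fields}), the $\Z/\ell^{2}$-lift of $u$ kills the Bockstein, and the unstability relation $\on{Sq}^{3}x=x^{2}$ for $|x|=3$ together with graded commutativity and the torsion-freeness of $H^{6}(\ov{X},\Z_{2}(4))$ kills $\on{Sq}^{3}$. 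The initial reduction — writing $\pi_\ell(\alpha)=\iota([\pi_\ell(u)])$ and pushing $P$ through the Hochschild--Serre filtration via \Cref{hochschild-serre-steenrod-finite-fields} — is the same compatibility the paper uses in \Cref{h1-trick}, just run in the vanishing direction. It is worth noting that both arguments ultimately rest on the same fact, namely that $H^{6}(\ov{X},\Z_{\ell})$ is torsion-free because $\dim X=3$ (the paper uses it to kill $d_3$, you use it to kill $u\cup u$); your version makes this dependence — which is precisely what fails for the fourfolds of \Cref{mainthm} — more transparent and is self-contained. Two small points of bookkeeping: the degree-$3$ part of the mod $2$ Steenrod algebra is spanned by $\on{Sq}^{3}$ \emph{and} $\on{Sq}^{2}\on{Sq}^{1}$, so you should say explicitly that the latter vanishes because $\on{Sq}^{1}(\pi_{2}(u))=0$; and the identification $\pi_{\ell}(\alpha)=\iota([\pi_{\ell}(u)])$ uses the compatibility of (\ref{hochschild-serre-cycle-map-ell-adic}) with (\ref{hochschild-serre-cycle-map}) and of the $\ell$-adic and mod $\ell$ versions of (\ref{h1-surj}), which deserves a sentence but is standard.
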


\begin{proof}
    In view of \Cref{hochschild-serre-steenrod-finite-fields}, it suffices to show that all odd-degree \'etale Steenrod operations vanish on the image of the reduction map $H^3(\ov{X},\Z_\ell)\to H^3(\ov{X},\Z/\ell)$. This is clear if $\ell$ is odd: indeed, the only non-zero Steenrod operation of odd degree $\leq 3$ is the Bockstein, which vanishes on $\ov{\alpha}$ because $\ov{\alpha}$ lifts to an integral class. Therefore, we may suppose that $\ell=2$. In this case, the proof follows from the argument of \cite[Proposition 3.6]{benoist2021coniveau}, replacing the topological Eilenberg-Maclane spectrum $H\Z$ by the $\ell$-adic Eilenberg-Maclane spectrum $H\Z_{\ell}$ and the complex cobordism spectrum $\on{MU}$ by the $\ell$-adic cobordism spectrum $\hat{MU}$; see \cite{quick2011torsion}.
\end{proof}

\subsection{Proof of Theorem \ref{mainthm-tate}}\label{section-pfmain}

Before we begin with the proof of \Cref{mainthm-tate}, we need some observations about the $\ell$-adic cohomology and cycle class map of the classifying space of $\textrm{PGL}_{\ell}$.

\begin{lem}\label{pgl-coh}
Let $k$ be an algebraically closed field and $\ell$ be a prime number invertible in $k$.

(a) If $\ell=2$, there exists $u_3\in H^3(B_k\textrm{PGL}_2,\Z_2)$ such that $\on{Sq}^3(\pi_2(u_3))\neq 0$.

(b) If $\ell$ is odd, there exists $u_3\in H^3(B_k\textrm{PGL}_\ell,\Z_{\ell})$ such that $\beta \on{P}^1(\pi_{\ell}(u_3))\neq 0$. 
\end{lem}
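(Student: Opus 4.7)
The plan is to reduce both statements to the analogous assertions for singular cohomology of $B\textrm{PGL}_\ell(\C)$ via the comparison isomorphism in Lemma 2.4, and then to produce $u_3$ as the integral Bockstein of the universal Brauer class and verify the non-vanishing by explicit calculation.

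\textbf{Reduction.} First I would apply \Cref{base-change-bg} to the reductive $\Z$-group scheme $\mc{G}=\textrm{PGL}_\ell$: this gives a ring isomorphism $H^*(B_k\textrm{PGL}_\ell,\Z/\ell)\simeq H^*_{\on{sing}}(B\textrm{PGL}_\ell(\C),\Z/\ell)$ compatible with Steenrod operations and, together with the corresponding $\Z_\ell$-statement, with the mod-$\ell$ reduction $\pi_\ell$. So it suffices to produce the desired class in $H^3_{\on{sing}}(B\textrm{PGL}_\ell(\C),\Z)$ with the analogous property for classical Steenrod operations.

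\textbf{Construction and the case $\ell=2$.} The central extension $1\to\mu_\ell\to \textrm{SL}_\ell\to \textrm{PGL}_\ell\to 1$ of complex Lie groups defines a universal Brauer class $w_2\in H^2(B\textrm{PGL}_\ell(\C),\F_\ell)$. Take $u_3:=\tilde\beta(w_2)\in H^3(B\textrm{PGL}_\ell(\C),\Z)$; this is an $\ell$-torsion class whose mod-$\ell$ reduction is $w_3:=\beta(w_2)\in H^3(B\textrm{PGL}_\ell(\C),\F_\ell)$. For $\ell=2$, the Leray--Serre spectral sequence of $B\mu_2\to B\textrm{SL}_2\to B\textrm{PGL}_2$ (using $H^*(B\textrm{SL}_2,\F_2)=\F_2[c_2]$) gives the classical computation $H^*(B\textrm{PGL}_2(\C),\F_2)=\F_2[w_2,w_3]$, and the unstable axiom $\on{Sq}^n(x)=x^2$ for $|x|=n$ immediately yields $\on{Sq}^3(w_3)=w_3^2\neq 0$.

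\textbf{Case $\ell$ odd.} I would use the Milnor operation $Q_1=\on{P}^1\beta-\beta\on{P}^1$; combined with $\beta^2=0$ this gives $\beta\on{P}^1\beta=-Q_1\beta$, so applied to $w_2$ we obtain $\beta\on{P}^1(w_3)=-Q_1(w_3)$. Thus it remains to show $Q_1(w_3)\neq 0$. Detect this by restriction along $\iota\colon(\Z/\ell)^2\hookrightarrow\textrm{PGL}_\ell(\C)$, the embedding coming from the Heisenberg $\ell$-group of order $\ell^3$ acting on $\C^\ell$ by the standard diagonal and cyclic-shift matrices. The pullback central extension is the Heisenberg extension of $(\Z/\ell)^2$ by $\mu_\ell$, whose class in $H^2(B(\Z/\ell)^2,\F_\ell)=\F_\ell\langle x_1x_2, y_1,y_2\rangle$ (with $y_i=\beta x_i$) equals $c\cdot x_1x_2$ for some $c\in\F_\ell^\times$, since the commutator pairing on the Heisenberg is the standard symplectic form. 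Hence $\iota^*w_3=c(y_1x_2-x_1y_2)$, and using $Q_1(x_i)=y_i^\ell$, $Q_1(y_i)=0$ together with the derivation property of $Q_1$ one computes
\[
\iota^*(Q_1w_3)=c\bigl(y_1y_2^{\ell}-y_1^{\ell}y_2\bigr)\neq 0\quad\text{in } H^{2\ell+2}(B(\Z/\ell)^2,\F_\ell),
\]
giving $Q_1(w_3)\neq 0$ and hence $\beta\on{P}^1(\pi_\ell(u_3))\neq 0$.

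\textbf{Main obstacle.} The delicate point is the identification $\iota^*w_2=c\cdot x_1x_2$: one must verify that the Heisenberg pullback of the tautological $\textrm{SL}_\ell$-extension is the unique (up to scalar) non-abelian central extension of $(\Z/\ell)^2$ by $\Z/\ell$, which amounts to computing the commutator in the Heisenberg group and matching it against the symplectic generator of $H^2((\Z/\ell)^2,\F_\ell)/\langle y_1,y_2\rangle$. The remaining verifications (the derivation property of $Q_1$ and its action on $H^*(B\Z/\ell,\F_\ell)$) are standard.
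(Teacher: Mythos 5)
Your proof is correct, and while part (a) coincides with the paper's argument (the paper phrases it via $\textrm{PGL}_2(\C)\simeq\textrm{SO}_3(\C)$ and Wu's formula, but the content — $u_3=\tilde\beta(w_2)$, $\on{Sq}^3(w_3)=w_3^2\neq 0$ in $\F_2[w_2,w_3]$ — is identical), part (b) takes a genuinely different route. The paper first pins down $H^3(B\textrm{PGL}_\ell(\C),\Z)\simeq\Z/\ell$ using Vistoli's (or Antieau--Williams') computation, produces $u_3$ by pulling back the fundamental class along $B\textrm{PGL}_\ell(\C)\to K(\Z,3)$, and then gets the non-vanishing of $\beta\on{P}^1(\pi_\ell(u_3))$ from the Kudo transgression theorem in the Serre spectral sequence of $K(\Z,2)\to B\textrm{GL}_\ell(\C)\to B\textrm{PGL}_\ell(\C)$, using that $H^*(B\textrm{GL}_\ell(\C),\Z/\ell)$ is concentrated in even degrees. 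You instead rewrite $\beta\on{P}^1\beta=-Q_1\beta$ and detect $Q_1(w_3)$ by restricting to the elementary abelian $(\Z/\ell)^2\subset\textrm{PGL}_\ell(\C)$ coming from the Heisenberg group of clock and shift matrices. Your approach buys explicitness and avoids both the Kudo theorem and any prior knowledge of $H^{\leq 4}(B\textrm{PGL}_\ell(\C),\Z)$, at the cost of the commutator computation identifying $\iota^*w_2$ (which you correctly isolate as the delicate step; note that even if $\iota^*w_2$ had components along $y_1,y_2$, these are killed by $\beta$, so only the non-vanishing of the symplectic coefficient $c$ — i.e.\ non-commutativity of the Heisenberg cover — actually matters) and the standard facts $Q_1(x_i)=y_i^\ell$, $Q_1(y_i)=0$ on $H^*(B(\Z/\ell)^2,\F_\ell)$. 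The final non-vanishing of $y_1y_2^\ell-y_1^\ell y_2$ in the polynomial part is clear, so the detection closes the argument. Both proofs reduce to $\C$ via \Cref{base-change-bg} in the same way.
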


\begin{proof}
By \Cref{base-change-bg}, we may suppose that $k=\mathbb{C}$ and replace \'etale cohomology by singular cohomology.

(a) We have an isomorphism $\textrm{PGL}_2(\C)\simeq \textrm{SO}_3(\C)$.
Write $H^*_{\on{sing}}(B\textrm{SO}_3(\C),\Z/2)=\F_2[w_2,w_3]$, where $w_2$ and $w_3$ are the Stiefel-Whitney classes; see e.g. \cite[Theorem 2.2, (2.3)']{toda1987cohomology}. By Wu's formula, we have $\on{Sq}^1(w_2)=w_3$. Define $u_3\coloneqq \tilde{\beta}(w_2)$. Since $\on{Sq}^1=\pi_2\circ\tilde{\beta}$, we  have $\pi_2(u_3)=w_3$, hence $\on{Sq}^3(\pi_2(u_3))=\pi_2(u_3)^2=w_3^2\neq 0$. 

(b) By \cite[Theorem 3.6(b)]{vistoli2007cohomology} or \cite[\S 3]{antieau2014topological},
    we have
    \begin{equation}\label{small-coh-pgl}
    H^i_{\on{sing}}(B\textrm{PGL}_{\ell}(\C),\Z) = \begin{cases}
  \Z  & i=0, \\
  0 & i=1,\\
  0 & i=2,\\
  \Z/\ell & i=3,\\
  \Z & i=4.\\
\end{cases}
\end{equation}
From  (\ref{small-coh-pgl}) and the universal coefficient theorem, we deduce that \[H^2_{\on{sing}}(B\textrm{PGL}_{\ell}(\C),\Z/\ell)\simeq \Z/\ell\] and that $\pi_{\ell}\colon H^3_{\on{sing}}(B\textrm{PGL}_{\ell}(\C),\Z)\to H^3_{\on{sing}}(B\textrm{PGL}_{\ell}(\C),\Z/\ell)$ is an isomorphism.

 Consider the Serre spectral sequence associated to the Serre fibration
    \[B\textrm{GL}_{\ell}(\C)\to B\textrm{PGL}_{\ell}(\C)\xrightarrow{f} K(\Z,3).\]
We have an isomorphism $H^*_{\on{sing}}(B\textrm{GL}_{\ell}(\C),\Z/\ell)\simeq (\Z/\ell)[c_1,\dots,c_{\ell}]$, where $|c_i|=2i$. Since $H^2_{\on{sing}}(B\textrm{PGL}_{\ell}(\C),\Z/\ell)\neq 0$, we have $d_2(c_1)=0$ and $d_3(c_1)=0$, hence
\[f^*\colon  H^3_{\on{sing}}(K(\Z,3),\Z/\ell)\to  H^3_{\on{sing}}(B\textrm{PGL}_{\ell}(\C),\Z/\ell)\]
is an isomorphism. By \cite[VII.4.19(2)]{mimura1991topology}, $H^3_{\on{sing}}(K(\Z,3),\Z/\ell)=(\Z/\ell)\cdot v_3$ for some $v_3\neq 0$. Let $u_3$ be the unique element of $H^3_{\on{sing}}(B\textrm{PGL}_{\ell}(\C),\Z)$ such that $u_3=f^*(v_3)$.

Recall from \cite[VII.4.19(1)]{mimura1991topology} that $H^*(K(\Z,2),\Z/{\ell})=(\Z/\ell)[z_2]$, where $|z_2|=2$. Consider the mod $\ell$ Serre spectral sequence associated to the Serre fibration
\[K(\Z,2)\to B\textrm{GL}_{\ell}(\C)\to B\textrm{PGL}_{\ell}(\C).\]
We must have $d_3(z_2)=\lambda \pi_{\ell}(u_3)$ for some $\lambda\in (\Z/\ell)^{\times}$, hence by the Kudo transgression theorem \cite[Theorem 6.14]{mccleary2001user} the class $z_2^{\ell-1}\otimes \lambda \pi_{\ell}(u_3)\in E_2^{2{\ell}-2,3}$ is transgressive and $d_{2\ell -1}(z_2^{\ell-1}\otimes \lambda \pi_{\ell}(u_3))=-\lambda \beta\on{P}^1(\pi_{\ell}(u_3))$, that is, $d_{2\ell -1}(z_2^{\ell-1}\otimes \pi_{\ell}(u_3))=- \beta\on{P}^1(\pi_{\ell}(u_3))$. Since $H^*(B\textrm{GL}_{\ell}(\C),\Z/\ell)$ is concentrated in even degrees and the total degree of $z_2^{\ell-1}\otimes \pi_{\ell}(u_3)$ is odd, $z_2^{\ell-1}\otimes \pi_{\ell}(u_3)$ does not survive in the $E_{\infty}$ page, hence $\beta\on{P}^1(\pi_{\ell}(u_3))\neq 0$, as desired.
\end{proof}

\begin{lem}\label{fulton-specialization}
Let  $k$ be a field and $\ell$ be a prime number invertible in $k$. 

(a) Let $i\geq 0$ be an integer, and suppose that $\mathcal{G}$ be a reductive group scheme over $\Z$ and that the cycle map \[CH^i(B_{\mathbb{Q}}\mathcal{G})\otimes\Z_{\ell}\to H^{2i}(B_{\ov{\mathbb{Q}}}\mathcal{G},\Z_{\ell}(i))\] is surjective. Then \[CH^i(B_k\mathcal{G})\otimes\Z_{\ell}\to H^{2i}(B_{\ov{k}}\mathcal{G},\Z_{\ell}(i))\]
is also surjective.

(b) The cycle map
\[CH^i(B_k\textrm{PGL}_{\ell})\otimes\Z_{\ell}\to H^{2i}(B_{\ov{k}}\textrm{PGL}_{\ell},\Z_{\ell}(i))\] is surjective for all $i\geq 0$.
\end{lem}

\begin{proof}
    (a) If $\on{char}(k)=0$, this follows from  the invariance of \'etale cohomology under extensions of algebraically closed fields. Suppose that $\on{char}(k)=p>0$. By the invariance of \'etale cohomology under extensions of algebraically closed fields, we may suppose that $k=\F_p$. There exist a $\mathcal{G}_{\Z_p}$-representation $V$ and an open subscheme $U\subset V$ such that $V_{\F_p}-U_{\F_p}$ and $V_{\mathbb{Q}_p}-U_{\mathbb{Q}_p}$ have codimension $\geq i+1$ in $V_{\F_p}$ and $V_{\Q_p}$, respectively, and a $\mathcal{G}_{\Z_p}$-torsor $U\to B$, where $B$ is a smooth $\Z_p$-scheme. 
    
    Let $R\coloneqq W(\ov{\F}_p)$. Fix an algebraic closure $\ov{\Q}_p$ of $\Q_p$ and an inclusion of $\Z_p$-algebras $R\subset \ov{\Q}_p$.
    We obtain a commutative diagram    
    \begin{equation}\label{rectangle-specialize}
    \begin{tikzcd}
    Z^i(B_{\Q_p})\otimes\Z_{\ell} \arrow[r,->>]  & H^{2i}(B_{\ov{\Q}_p},\Z_{\ell}(i)) \\
    Z_{\on{flat}}^i(B/\Z_p)\otimes \Z_\ell \arrow[u,->>] \arrow[r] \arrow[d]  & H^{2i}(B_R,\Z_{\ell}(i)) \arrow[d] \arrow[u]  \\ 
    Z^i(B_{\F_p})\otimes\Z_{\ell} \arrow[r] & H^{2i}(B_{\ov{\F}_p},\Z_{\ell}(i)).
    \end{tikzcd}
    \end{equation}
    Here $Z_{\on{flat}}^i(B/\Z_p)$ is the free abelian group generated by classes of integral subschemes of $B$ which are flat (that is, dominant) over $\Z_p$. The horizontal map in the middle is defined as the inverse limit in $n$ of the cycle maps 
    \[Z^i_{\on{flat}}(B/\Z_p)\to  H^{2i}(B,\mu_{\ell^n}^{\otimes i})\to H^{2i}(B_R,\mu_{\ell^n}^{\otimes i})\] of \cite[Chapitre 4, \S 2.3]{SGA4.5} (where we take $S=\on{Spec}(\Z_p)$ in the definition). The top and bottom horizontal homomorphisms are the usual cycle maps, the top vertical maps are pullbacks along the open embeddings $B_{\Q_p}\hookrightarrow B$ and $B_{\ov{\Q}_p}\hookrightarrow B_R$, and the bottom vertical maps are pullbacks along the closed embeddings $B_{\F_p}\hookrightarrow B$ and $B_{\ov{\F}_p}\hookrightarrow B_R$. The top-left vertical map is surjective: indeed, if $Z\subset B_{\Q_P}$ is an integral subscheme, its closure inside $B$ is irreducible, hence flat over $R$.
    
    Since $V_{\F_p}-U_{\F_p}$ and $V_{\mathbb{Q}_p}-U_{\mathbb{Q}_p}$ have codimension $\geq i+1$ in $V_{\F_p}$ and $V_{\Q_p}$, respectively, by definition we have $CH^i(B_{\Q_p})=CH^i(B_{\Q_p}G)$ and $CH^i(B_{\F_p})=CH^i(B_{\F_p}G)$. Moreover, the natural morphism $B\to B_{\Z_p}G$ induces a commutative diagram
    \[
    \begin{tikzcd}
    H^{2i}(B_{\ov{\Q}_p},\Z_{\ell}(i)) \arrow[r,"\sim"]  & H^{2i}(B_{\ov{\Q}_p}\mc{G},\Z_{\ell}(i)) \\
    H^{2i}(B_R,\Z_{\ell}(i)) \arrow[u] \arrow[r,"\sim"] \arrow[d]  & H^{2i}(B_R\mc{G},\Z_{\ell}(i)) \arrow[d] \arrow[u]  \\ 
    H^{2i}(B_{\ov{\F}_p},\Z_{\ell}(i)) \arrow[r,"\sim"] & H^{2i}(B_{\ov{\F}_p}\mc{G},\Z_{\ell}(i)).
    \end{tikzcd}
    \]
    By \cite[Corollary 2]{friedlander1981etale}  the two vertical maps on the right are isomorphisms. The conclusion follows from (\ref{rectangle-specialize}).
    
    (b) In view of (a), we may suppose that $k=\mathbb{Q}$. The conclusion now follows from \cite{pandharipande1998equivariant} when $\ell=2$, and is the content of \cite[Corollary 3.5]{vistoli2007cohomology} when $\ell\neq 2$.
\end{proof}

\begin{lem}\label{galois-acts-triv}
    Let $\F$ be a finite field, $\ell$ be a prime invertible in $\F$, and suppose that $\F$ contains  a primitive $\ell$-th root of unity. Then the $G$-action on the group $H^{2\ell+2}(B_{\ov{\F}}(\textrm{PGL}_{\ell}\times\Gm),\Z/\ell)$ is trivial.
\end{lem}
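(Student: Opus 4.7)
My plan is to reduce the claim via the Künneth formula to a $G$-triviality statement for each factor. For classifying spaces over $\ov{\F}$ with $\Z/\ell$ coefficients, the Künneth formula gives a $G$-equivariant isomorphism
\[
H^{2\ell+2}(B_{\ov{\F}}(\textrm{PGL}_{\ell}\times\Gm),\Z/\ell) \;\cong\; \bigoplus_{i+j=2\ell+2} H^i(B_{\ov{\F}}\textrm{PGL}_\ell,\Z/\ell) \otimes H^j(B_{\ov{\F}}\Gm,\Z/\ell).
\]
Because $B_{\ov{\F}}\Gm\simeq\P^{\infty}_{\ov{\F}}$ has cohomology concentrated in even degrees, only terms with both $i$ and $j$ even contribute; it therefore suffices to prove that $G$ acts trivially on $H^{2i}(B_{\ov{\F}}\textrm{PGL}_\ell,\Z/\ell)$ and on $H^{2j}(B_{\ov{\F}}\Gm,\Z/\ell)$ for $0\le i,j\le \ell+1$.

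For the $\Gm$-factor the argument is short: $H^{2j}(B_{\ov{\F}}\Gm,\mu_\ell^{\otimes j})$ is spanned by $c_1^j$, where $c_1\in H^2(B_{\F}\Gm,\mu_\ell)$ is algebraic and hence $G$-fixed; since $\F$ contains $\mu_\ell$ the $G$-modules $\mu_\ell^{\otimes j}$ are all trivial, so the triviality transfers to $\Z/\ell$ coefficients. For the $\textrm{PGL}_\ell$-factor, I would invoke \Cref{fulton-specialization}(b): the cycle map
\[
CH^{i}(B_{\F}\textrm{PGL}_\ell)\otimes\Z_\ell \;\twoheadrightarrow\; H^{2i}(B_{\ov{\F}}\textrm{PGL}_\ell,\Z_\ell(i))
\]
is surjective, and since algebraic cycles on $B_{\F}\textrm{PGL}_\ell$ are defined over $\F$, their images in $H^{2i}(B_{\ov{\F}}\textrm{PGL}_\ell,\Z_\ell(i))$ are $G$-fixed. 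Hence $G$ acts trivially on the $\ell$-adic cohomology and on its mod $\ell$ reduction $H^{2i}(B_{\ov{\F}}\textrm{PGL}_\ell,\Z_\ell(i))/\ell$.

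The main obstacle is passing from $H^{2i}(\Z_\ell(i))/\ell$ to the full group $H^{2i}(\mu_\ell^{\otimes i})$, which differ by the $\ell$-torsion of the neighboring odd-degree group. The Bockstein sequence provides a $G$-equivariant short exact sequence
\[
0 \to H^{2i}(B_{\ov{\F}}\textrm{PGL}_\ell,\Z_\ell(i))/\ell \to H^{2i}(B_{\ov{\F}}\textrm{PGL}_\ell,\mu_\ell^{\otimes i}) \to H^{2i+1}(B_{\ov{\F}}\textrm{PGL}_\ell,\Z_\ell(i))[\ell] \to 0,
\]
and I would dispose of the rightmost term using \Cref{base-change-bg}: the identification $H^{*}(B_{\ov{\F}}\textrm{PGL}_\ell,\Z_\ell)\cong H^{*}_{\on{sing}}(B\textrm{PGL}_\ell(\C),\Z)\otimes\Z_\ell$ comes from Friedlander's comparison for the integral model $B_{\Z}\textrm{PGL}_\ell$, so it is $G$-equivariant with trivial action on the topological side, forcing trivial $G$-action also on the torsion subgroup on the right. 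Combining this with the triviality of the $G$-module $\mu_\ell^{\otimes i}$ (which uses only that $\F\supset\mu_\ell$) gives triviality of the $G$-action on $H^{2i}(B_{\ov{\F}}\textrm{PGL}_\ell,\Z/\ell)$, and the Künneth decomposition then finishes the proof. The delicate point I would need to make precise is the $G$-equivariance of Friedlander's comparison in odd degrees, where classes are not detected by algebraic cycles.
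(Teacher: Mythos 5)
Your K\"unneth-first reduction and your treatment of the $\Gm$-factor and of the free part of $H^{2i}(B_{\ov{\F}}\textrm{PGL}_{\ell},\Z_{\ell}(i))$ via \Cref{fulton-specialization}(b) are fine, but there are two genuine gaps. The most serious one is the final step, where you deduce triviality of the $G$-action on the middle term of the Bockstein sequence from triviality of the action on the sub and on the quotient. That implication is false: an extension of trivial $\F_\ell[G]$-modules can carry a nontrivial unipotent Frobenius action, e.g. $\left(\begin{smallmatrix}1&1\\0&1\end{smallmatrix}\right)$ on $(\Z/\ell)^2$. This is precisely the difficulty the paper's proof is built to overcome: it works over the subfield $\F_0=\F_p(\mu_\ell)$, shows that Frobenius acts on the relevant extension by a unipotent matrix $A$ satisfying $A^{\ell}=1$, and then uses the hypothesis that $\F$ contains a primitive $\ell^2$-th root of unity to conclude that $[\F:\F_0]$ is divisible by $\ell$, so that the Frobenius of $\F$ acts as $A^{[\F:\F_0]}=\on{id}$. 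Your argument never invokes the $\mu_{\ell^2}$ hypothesis, only $\mu_\ell\subset\F$ --- a strong signal that the extension step cannot be obtained for free. Note also that both ends of your per-factor Bockstein sequence can be simultaneously nonzero, so the issue is not vacuous: for $\ell=2$ one has $H^6(B_{\ov{\F}}\textrm{SO}_3,\Z_2)\supset\Z/2\cdot W_3^2$ and $H^7(B_{\ov{\F}}\textrm{SO}_3,\Z_2)=\Z/2\cdot p_1W_3$.

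The second gap is the step you yourself flag as delicate, and it does fail as stated: the comparison of \Cref{base-change-bg} identifies $H^*(B_{\ov{\F}}\mc{G},\Z_\ell)$ with $H^*_{\on{sing}}(B\mc{G}(\C),\Z)\otimes\Z_\ell$ as groups, but it is not $G$-equivariant for the trivial action on the topological side --- the Frobenius action is simply invisible there. Already $H^2(B_{\ov{\F}}\Gm,\Z_\ell)\simeq\Z_\ell(-1)$ carries the inverse cyclotomic character, so no such equivariance can hold in general, even in even degrees. To control the odd-degree torsion the paper instead quotes Vistoli's vanishing of $H^{j}(B_{\ov{\F}}\textrm{PGL}_\ell,\Z_\ell)$ for odd $j$ in the relevant range with $j\neq 3$, and in degree $3$ identifies $H^3(B_{\ov{\F}}\textrm{PGL}_\ell,\Z_\ell)$ with $H^2(B_{\ov{\F}}\textrm{PGL}_\ell,\Z/\ell)$ via the integral Bockstein and argues triviality there; some such input is needed in place of your appeal to the topological comparison.
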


\begin{proof}
     Let $p\coloneqq \on{char}(\F)$. Since $\F$ contains a primitive $\ell$-th root of unity, the $G$-action on $H^{2i}(B_{\ov{\F}}\Gm,\Z/\ell)\simeq \Z/\ell$ is trivial for all $i\geq 0$. By the K\"unneth formula, it thus suffices to show that $H^{2i}(B_{\ov{\F}}\textrm{PGL}_{\ell},\Z/\ell)$ is $G$-invariant for all $0\leq i\leq \ell+1$. This is clear for $i=0$.

    By \cite[Theorem 3.6]{vistoli2007cohomology} we have $H^3(B_{\ov{\F}}\textrm{PGL}_{\ell},\Z_{\ell})=\Z/\ell$ and  
    $H^{2i+1}(B_{\ov{\F}}\textrm{PGL}_{\ell},\Z_{\ell})=0$
    for all $2\leq i\leq \ell+1$. (This is sharp, as $H^{2\ell+5}(B_{\ov{\F}}\textrm{PGL}_{\ell},\Z_{\ell})\neq 0$.) The Bockstein short exact sequence (see \cite[Lemma V.1.11]{milne1980etale})
    \begin{equation}\label{bock-2l+2}0\to H^{2i}(B_{\ov{\F}}\textrm{PGL}_{\ell},\Z_{\ell})/\ell \to H^{2i}(B_{\ov{\F}}\textrm{PGL}_{\ell},\Z/{\ell}) \to H^{2i+1}(B_{\ov{\F}}\textrm{PGL}_{\ell},\Z_{\ell})[\ell]\to 0
    \end{equation}
    is $G$-equivariant. We deduce that the reduction map 
    \[H^{2i}(B_{\ov{\F}}\textrm{PGL}_{\ell},\Z_{\ell})/\ell\to H^{2i}(B_{\ov{\F}}\textrm{PGL}_{\ell},\Z/{\ell})\]
    is an isomorphism for all $2\leq i\leq \ell+1$. Now \Cref{fulton-specialization}(b) implies that $G$ acts trivially on $H^{2i}(B_{\ov{\F}}\textrm{PGL}_{\ell},\Z/{\ell})$ for all $2\leq i\leq \ell+1$.

    Suppose now that $i=1$. Then by (\ref{small-coh-pgl}) we know that $H^2(B_{\ov{\F}}\textrm{PGL}_{\ell},\Z_{\ell})=0$, hence by (\ref{bock-2l+2}) we have a $G$-equivariant isomorphism
     \[H^2(B_{\ov{\F}}\textrm{PGL}_{\ell},\Z/{\ell})\xrightarrow{\sim} H^3(B_{\ov{\F}}\textrm{PGL}_{\ell},\Z_{\ell}).\]
     Consider the Serre spectral sequence with $\Z_\ell$ coefficients associated to the fibration $B_{\ov{\F}}\Gm\to B_{\ov{\F}}\on{GL}_\ell\to B_{\ov{\F}}\on{PGL}_\ell$:
     \[E_2^{i,j}=H^i(B_{\ov{\F}}\on{PGL}_\ell,H^j(B_{\ov{\F}}\Gm,\Z_\ell))\Rightarrow H^{i+j}(B_{\ov{\F}}\on{GL}_\ell,\Z_\ell).\]
     Since the fibration is defined over $\F$, the spectral sequence is $G$-equivariant. Let $h$ be a generator of $E_2^{0,2}=H^2(B_{\ov{\F}}\Gm,\Z_\ell)\simeq \Z_\ell(-1)$. Since $H^3(B_{\ov{\F}}\on{GL}_\ell,\Z_\ell)=0$, the group $E_2^{0,3}=H^3(B_{\ov{\F}}\on{PGL}_\ell,\Z_\ell)\simeq \Z/\ell$ does not survive to the $E_\infty$ page, and so it is generated by $d_3(h)$.
     
     Let $\sigma\in G$ be the Frobenius endomorphism. The $G$-action on $H^2(B_{\ov{\F}}\Gm,\Z_\ell)/\ell$ is trivial, hence $\sigma(h)-h$ is a multiple of $\ell h$. It follows that $\sigma(d_3(h))-d_3(h)\in H^3(B_{\ov{\F}}\on{PGL}_\ell,\Z_\ell)$ is also a multiple of $\ell$. Since $H^3(B_{\ov{\F}}\on{PGL}_\ell,\Z_\ell)= (\Z/\ell)\cdot d_3(h)$ is $\ell$-torsion, this means that $d_3(h)$ is $G$-invariant, that is, that $G$ acts trivially on $H^3(B_{\ov{\F}}\on{PGL}_\ell,\Z_\ell)$.
\end{proof}

\begin{proof}[Proof of \Cref{mainthm-tate}]
    By \cite[Remark (v) p. 5 and \S 1.1]{ekedahl2009approximating}, there exist a smooth projective $\F$-variety $X$ of dimension $2\ell+3$ and a morphism $\varphi\colon X\to B_{\F}(\textrm{PGL}_{\ell}\times \Gm)$ over $\F$ such that the pullback \[\varphi^*\colon H^*(B_{\ov{\F}}(\textrm{PGL}_{\ell}\times \Gm),\Z_\ell)\to H^*(\ov{X},\Z_\ell)\]
is an isomorphism in degrees $\leq 2\ell+2$ and is injective with torsion-free cokernel in degree $2\ell+3$. We may assume that $X$ and $\varphi$ are defined over the prime field of $X$. The cycle map
\[\cl\colon CH^*(B_{\F}\Gm)\to H^{2*}(B_{\ov{\F}}\Gm,\Z_\ell(i))\simeq \Z_{\ell}[t],\qquad |t|=2\] is an isomorphism. We have a commutative square
    \[
    \adjustbox{scale=1,center}{
    \begin{tikzcd}
    CH^*(B_{\F}\textrm{PGL}_{\ell})\otimes CH^*(B_{\F}\Gm)\otimes\Z_{\ell} \arrow[r,"\sim"] \arrow[d,"\cl\otimes \cl"] & CH^*(B_{\F}(\textrm{PGL}_{\ell}\times \Gm))\otimes\Z_{\ell} \arrow[d,"\cl"] \\
    H^{2*}(B_{\ov{\F}}\textrm{PGL}_{\ell},\Z_{\ell}(*))\otimes_{\Z_{\ell}} H^{2*}(B_{\ov{\F}}\Gm,\Z_{\ell}(*)) \arrow[r,"\sim"] & H^{2*}(B_{\ov{\F}}(\textrm{PGL}_{\ell}\times \Gm),\Z_{\ell}(*)),
    \end{tikzcd}
    }
    \]
    where the top horizontal map is the K\"unneth isomorphism of \cite[Lemma 2.12(i)]{totaro2014group}, the bottom horizontal map is the K\"unneth isomorphism in \'etale cohomology, and the vertical maps are the cycle class maps. We deduce from \Cref{fulton-specialization}(b) that the cycle map $CH^i(B_{\F}(\textrm{PGL}_{\ell}\times \Gm))\otimes\Z_{\ell} \to H^{2i}(B_{\ov{\F}}(\textrm{PGL}_{\ell}\times \Gm),\Z_{\ell}(i))$ is surjective for all integers $i\geq 0$. It now follows from the commutativity of the square
        \[
    \begin{tikzcd}
    CH^2(B_{\F}(\textrm{PGL}_{\ell}\times \Gm))\otimes\Z_{\ell} \arrow[r] \arrow[d]  & CH^2(X)\otimes\Z_{\ell} \arrow[d] \\
    H^{4}(B_{\ov{\F}}(\textrm{PGL}_{\ell}\times\Gm),\Z_{\ell}(2)) \arrow[r,"\sim"] & H^{4}(\ov{X},\Z_{\ell}(2))
    \end{tikzcd}
    \]
    that the cycle map $CH^2(X)\otimes\Z_\ell \to H^4(\ov{X},\Z_\ell(2))$ is surjective.
    
    From the $\ell$-adic Bockstein short exact sequence \cite[Lemma V.1.11]{milne1980etale} we see that
    \[\varphi^*\colon H^*(B_{\ov{\F}}(\textrm{PGL}_{\ell}\times \Gm),\Z/\ell)\to H^*(\ov{X},\Z/\ell)\]
    is an isomorphism in degrees $\leq 2\ell+2$. (Here we use the fact that the pullback is injective with torsion-free cokernel in degree $2\ell+3$.) In particular, by \Cref{galois-acts-triv}, $G$ acts trivially on $H^{2\ell+2}(\ov{X},\Z/\ell)$. Let $\varphi'\coloneqq \on{pr}_1\circ \varphi$, where \[\on{pr}_1\colon B_{\F}\textrm{PGL}_{\ell}\times B_{\F}\Gm\to B_{\F}\textrm{PGL}_{\ell}\] is the first projection. 
       As a consequence, by the K\"unneth formula in \'etale cohomology and the injectivity of $\varphi^*$ is degrees $\leq 2\ell+2$, the composition 
     \[(\varphi')^*\colon H^*(B_{\ov{\F}}\textrm{PGL}_{\ell},\Z/\ell)\to H^*(\ov{X},\Z_\ell)\]
    is injective in degrees $\leq 2\ell+2$. 
    
    Consider the Steenrod operation $P$ given by $P=\on{Sq}^3$ if $\ell=2$ and $P=\beta\on{P}^1$ if $\ell\neq 2$. The degree of $P$ is equal to the odd number $2\ell-1$. By \Cref{pgl-coh}, there exists a class $u_3\in H^3(B_{\ov{\F}}\textrm{PGL}_{\ell},\Z_{\ell}(2))$ such that $P(\pi_{\ell}(u_3))\neq 0$. Define $u\coloneqq (\varphi')^*(u_3)$. Since $P(\pi_{\ell}(u_3))$ has degree $2\ell+2$ and $(\varphi')^*$ is injective in degrees $\leq 2\ell+2$, we have $P(\pi_{\ell}(u))\neq 0$. Since $G$ acts trivially on $H^{2\ell+2}(\ov{X},\Z/\ell)$, the assumptions of \Cref{h1-trick} are satisfied, hence the conclusion follows from \Cref{h1-trick}.
\end{proof}

\section*{Acknowledgements}

We thank Burt Totaro and Jean-Louis Colliot-Th\'el\`ene for  helpful  comments and suggestions.

\end{document}